\newtheorem{theorem}{Theorem}[section]             \newtheorem{proposition}[theorem]{Proposition}     \newtheorem{lemma}[theorem]{Lemma}
\newtheorem{definition}[theorem]{Definition}
\newtheorem{remark}{Remark}[section]
\newcommand{\1}{\ensuremath{\mathds{1}}}
\renewcommand\thetable{\thesection.\@arabic\c@table}
\title[Fluctuations of the occupation density for a parking process]{Fluctuations of the occupation density for a parking process}
\author{Cristian F. Coletti, Sandro Gallo, Alejandro Rold\'an-Correa and  Le\'on A. Valencia}
\date{}
\address{
\newline
Cristian F. Coletti
\newline
Centro de Matem\'atica, Computa\c{c}\~ao e Cogni\c{c}\~ao,  Universidade Federal do ABC.
\newline
Santo Andre, S\~ao Paulo, Brazil.
\newline
e-mail:  cristian.coletti@ufabc.edu.br
\newline
\newline
Sandro Gallo
\newline
Departamento de Estat\'istica, Universidade Federal de S\~ao Carlos.
\newline
S\~ao Carlos, S\~ao Paulo, Brazil.
\newline
e-mail: sandro.gallo@ufscar.br
\newline
\newline
Alejandro Rold\'an
\newline
Instituto de Matem\'aticas, Universidad de Antioquia.
\newline
Medell\'in, Antioquia, Colombia.
\newline
e-mail:  alejandro.roldan@udea.edu.co
\newline
\newline
Le\'on A. Valencia
\newline
Instituto de Matem\'aticas, Universidad de Antioquia.
\newline
Medell\'in, Antioquia, Colombia.
\newline
e-mail:  lalexander.valencia@udea.edu.co.
}
\subjclass[2010]{primary 60K35}
\keywords{Parking process, Jamming Limit, Thermodynamic Limit, random sequential adsorption, concentration inequalities, limit theorems}
\thanks{Research funded by FAPESP (2022/08948-2) and Universidad de Antioquia (2023-58830).}
\begin{document}

\begin{abstract} Consider the following simple parking process on $\Lambda_n := \{-n, \ldots, n\}^d,d\ge1$: at each step, a site $i$ is chosen at random in $\Lambda_n$ and if $i$ and all its nearest neighbor sites are empty, $i$ is occupied. Once occupied, a site remains so forever. The process continues until all sites in $\Lambda_n$ are either occupied or have at least one of their nearest neighbors occupied. The final configuration (occupancy) of $\Lambda_n$ is called the jamming limit and is denoted by $X_{\Lambda_n}$. Ritchie~\cite{MR2205908} constructed a stationary random field on $\mathbb Z^d$ obtained as a (thermodynamic) limit of the $X_{\Lambda_n}$'s as $n$ tends to infinity.
As a consequence of his construction, he proved a strong law of large numbers for the proportion of occupied sites in the box $\Lambda_n$ for the random field $X$. Here we prove the central limit theorem, the law of iterated logarithm, and a gaussian concentration inequality for the same statistics. A particular attention will be given to the case $d=1$, in which we also obtain new asymptotic properties for the sequence $X_{\Lambda_n},n\ge1$ as well as a new proof to the closed-form formula for the occupation density of the parking process. 
\end{abstract}

\maketitle

\section{Introduction}

\label{sec:intro}

Parking processes, also known as \textit{random sequential adsorption} models, refer to processes in which particles are randomly introduced {into a bounded domain, and are adsorbed (and remain as such forever) only if they do not overlap with any previously adsorbed particle.} A question of interest in these models is to determine the proportion of the space that is covered by the adsorbed particles at the end of the process (i.e., when it is not possible to absorb more particles). Variations and generalizations of random sequential adsorption models have received attention for their applications in the physical and chemical sciences. For an overview and motivation, see, for instance, \cite{RevModPhys.65.1281, 10.2307/2983806}. Many analytical results have been established in one-dimensional models (see \cite{MR1282578, doi:10.1063/1.1733154, 10.2307/2983806, renyi1958one}) and also in quasi-one-dimensional models (see \cite{MR1162872,MR1282579}). In higher dimensions, {some statistical properties} have been studied via computer simulations (see \cite{RevModPhys.65.1281, TALBOT2000287}). However, there exist few rigorous theoretical results. In this regard, we highlight the contributions of Penrose~\cite{MR1887532} and Ritchie~\cite{MR2205908} who proved asymptotic results for {special classes} of these models.

\vspace{0,2cm}

In this work, we consider a version of a parking process on the box $\Lambda_n := \{-n, \ldots, n\}^{d},d\ge1$ with a simple occupation scheme.
 It is constructed iteratively as follows.
\begin{itemize}
\item[1:] Start with all sites in $\Lambda_n$ empty.
\item[2:] A non yet chosen site in $\Lambda_n$ is chosen at random, and if all its nearest {neighbors} are empty, then the site is declared occupied. Once occupied, a site remains so forever.
\item[3:] Repeat step 2 until all sites in $\Lambda_n$ have been chosen (that is, are either occupied or have at least one of their nearest neighbors occupied).
\end{itemize}
Assigning $1$'s to occupied sites and $0$ to non-occupied sites, we end up with a configuration in $\{0,1\}^{\Lambda_n}$, called the \textit{jamming limit of $\Lambda_n$}, and  denoted by $X_{\Lambda_n}$.

Penrose~\cite{MR1887532}  proved the  law of large numbers ($L^p$-convergence) and  the Central Limit Theorem (CLT) for $\frac{1}{|\Lambda_n|}\sum_{i\in \Lambda_n}X_{\Lambda_n}(i),n\geq0$, the sequence of occupancy densities of the jamming limits.

Later on,  Ritchie~\cite{MR2205908} proved that there exists a translation invariant random field $X$ on $\mathbb Z^d$ such that $X_{\Lambda_n}\rightarrow X$ almost surely. This $X$ can be seen as the jamming limit on $\mathbb Z^d$ of the parking process, and in the statistical physics terminology, it is the thermodynamic limit of the jamming limits $X_{\Lambda_n},n\ge1$.  His proof is constructive: he designed an algorithm that samples from  $X$ in any finite region $\Lambda\subset\mathbb Z^d$. As a corollary of his construction, he  obtained the strong law of large numbers for the sequence of occupancy densities of the jamming limits. 

\vspace{0,2cm}

In the present paper, we take advantage on the construction of Ritchie~\cite{MR2205908} to obtain new results for the proportion of occupied sites in $\Lambda_n$ for the parking process. We can split the main contributions of the present paper into two parts: results that hold for any $d\ge1$, and results that we were able to prove for the specific  case $d=1$. 

For any $d\ge1$, we prove the central limit theorem (CLT) and the law of iterated logarithm (LIL) for $\frac{1}{|\Lambda_n|}\sum_{i\in\Lambda_n}X(i),n\ge1$, the sequence of proportion of occupied sites in $\Lambda_n$ in the thermodynamic jamming limit. We also provide a gaussian concentration inequality (GCI), that quantifies precisely how the proportion of occupied sites deviates from its mean, in any finite boxes $\Lambda_n$ (non-asymptotic result). 
The proofs of these results use the construction of the thermodynamic limit $X$ provided by Ritchie~\cite{MR2205908}. Indeed, his algorithm allows us to show good mixing properties of the random field $X$, and then, to rely on existing results for stationary mixing random fields. 
These results for $d\ge1$ should be contrasted with those of Penrose~\cite{MR1887532}: his convergence results are obtained for the sequence free boundary boxes $X_{\Lambda_n},n\ge1$ while we obtain our results directly on the random field $X$. In other words, we consider the proportion of occupied sites $\frac{1}{|\Lambda_n|}\sum_{i\in\Lambda_n}X(i),n\ge1$ and he considers $\frac{1}{|\Lambda_n|}\sum_{i\in \Lambda_n}X_{\Lambda_n}(i),n\geq1$.

 The above observation motivated us to present other results for the case $d=1$. Indeed, under this restriction, we are able to transpose the above results to the sequence of proportions $\frac{1}{|\Lambda_n|}\sum_{i\in\Lambda_n}X_{\Lambda_n}(i),n\ge1$. So in particular, the LIL and GCI are new results in this context. Finally, we establish an alternative proof of the closed-form formula $\frac{1}{2}(1-e^{-2})$ for the occupation density of the one-dimensional parking process. This calculation, different to a previously existing argument of the literature (see Fan and Percus~\cite{MR1149487}), is based on a direct calculation of probability that the origin be occupied in $X$, and does not involve any limiting procedure. 

\vspace{0,2cm}
The paper is organized as follows. In Section 2, we introduce some notation, explain the construction of Ritchie~\cite{MR2205908} and state our main results. The proofs are given in  Section~3.

\section{Definitions and notation}

\subsection{Notation}
Throughout this paper we use  the following notation. Let ${\bf0}:=(0,0)$ denote the origin of $\mathbb{Z}^d$. For $i\in\mathbb{Z}^d,$ let $\|i\|$ denote the Euclidean norm of $i$. Given two sites $i,j\in \mathbb{Z}^d$, we say that they are nearest neighbors if $\|i-j\|=1$,  denoted by $i \sim j$. For $A\subset\mathbb{Z}^d,$ we denote by $|A|$ the number of elements of $A$ and by $\partial A$  the boundary of $A$, that is, $\partial A=\{i\in A: \text{there exists } j\notin A \text{ and } i\sim j\}$.  Notation $\Lambda_n(i):=\{j\in\mathbb Z^d:||i-j||_{\max}\le n\}$ stands for a box of size $n$ centered on $i\in\mathbb Z^d$, and in particular $\Lambda_n:=\Lambda_n(0)$.

For any set $\Lambda\subset \mathbb Z^d$ and any collection of random variables $X(i),i\in\mathbb Z^d$, we write $X(\Lambda)$ to denote the vector $\{X(i)\}_{i\in\Lambda}$. We use the shorthand notation $X=\{X(i)\}_{i\in\mathbb Z^d}$ for the  random field.

For any \(\Lambda\subset \mathbb{Z}^d\), we denote by \(\mathcal{F}_\Lambda\)  the \(\sigma\)-algebra generated by the random variables \(X(i)\), \(i\in \Lambda\).

\vspace{0.2cm}

\subsection{Construction of the jamming limit of the parking process on $\mathbb{Z}^d$} Recall the construction of the process in $\Lambda_n$ given in introduction. We start giving an alternative construction, using i.i.d. random variables in $(0,1)$ instead of iteratively choosing sites uniformly at random as done in introduction. We also extend the construction to any bounded region $\Lambda$ and any boundary configuration.

\begin{definition}[The \textit{parking process} on $\Lambda\subset \mathbb Z^d$ with boundary condition $x\in\{0,1\}^{\mathbb Z^d}$]\label{P.P.finite}
Fix  a configuration $x$ and let $U=\{U(i)\}_{i\in\mathbb{Z}^d}$ be a family of independent and identically distributed (i.i.d) random variables with a uniform distribution on $(0,1)$.
\begin{itemize}
\item[1:] Set $X^{(x)}_\Lambda(i)=0$ for $i\in \Lambda$ and $X^{(x)}_\Lambda(i)=x(i)$ for $i\in \Lambda^c$;
\item[2:] choose $i\in \Lambda$ such that $U(i)=\min\{  U(j): j\in \Lambda \text{ and } j \text{ has not been chosen previously}\}$;
\item[3:] if $X^{(x)}_\Lambda(j)=0$ for all $j\sim i$, then set $X^{(x)}_\Lambda(i)=1$. Otherwise, $X^{(x)}_\Lambda(i)=0$;
\item[4:] if there are points in $\Lambda$ not chosen yet, then go back to step $2$. Otherwise, stop the algorithm.
\end{itemize}
We call the final configuration $X^{(x)}_\Lambda$ the jamming limit of the parking process on $\Lambda$ with boundary condition $x$.
\end{definition}

Observe that the jamming limit $X^{(x)}_\Lambda$ on a finite subset $\Lambda$ of $\mathbb{Z}^d$ is constructed as a random element in $\{0,1\}^{\mathbb{Z}^d}$, that is, $X^{(x)}_\Lambda$ is a random field with frozen configuration $x$ on $\Lambda^c$. The projection on $\Lambda_n$ of the random field $X^{({0})}_{\Lambda_n}$ (notation for $X^{(x)}_{\Lambda_n}$ when $x(i)=0,i\in\mathbb Z^d$) has the same distribution as the vector  constructed in introduction. 

 In order to formalize the construction of the jamming limit of the parking process on the whole grid $\mathbb{Z}^d$, we first need to define the concept of \emph{armour}.

\begin{definition}[The armour of $\Lambda\subset\mathbb{Z}^d$ with respect to $U$] For $i,j\in\mathbb{Z}^d$, we write  $U(i\downarrow j)$ if there exist $i_0, \ldots, i_n\in \mathbb{Z}^d$ with $i_0=i$, $i_n=j,$ $\|i_{k}-i_{k-1}\|=1,$ $k=1,\ldots,n$, and the event $\{U(i_0)>U(i_1)>\ldots >U(i_n)\}$ occurs. Besides, given a subset $\Lambda\subset \mathbb{Z}^d$, we define the \textit{armour} of $\Lambda$ as the random subset
\begin{equation*}
\mathcal{A}(\Lambda):=\Lambda \cup \{ j\in \mathbb{Z}^d: \text{there exists } i \in \Lambda \text{ such that } U(i\downarrow j) \}.
\end{equation*}
\end{definition}
Observe that formally, the armour is obtained as a function of the random field $U$. When necessary to avoid possible confusions, we will emphasize this dependence using the notation $\mathcal A(\Lambda)(U)$.

For any finite set $\Lambda\subset \mathbb{Z}^d$, $\mathcal{A}(\Lambda)$ is almost surely finite, see \cite[Lemma 31]{MR2205908}. This allows defining, for each $i\in\mathbb{Z}^d,$ the jamming limit on the armour $\mathcal{A}(\{i\})$ according to Definition~\ref{P.P.finite}.
With this, Ritchie~\cite{MR2205908} constructed the {random field $X$ as a deterministic function of the random field $U$ through
\begin{equation*}
X(i):=X^{({0})}_{\mathcal{A}(\{i\})}(i),\hspace{0.5cm} \forall i\in\mathbb{Z}^d.
\end{equation*}
Here also, the field $X$ is obtained as a deterministic function of the field $U$. When necessary to avoid possible confusions, we will emphasize this dependence using the notation $X(U)$.
As a direct consequence of this construction he also proved that}
\begin{equation*}
\lim_{n\to\infty}X^{({0})}_{\Lambda_n}=X, \hspace{0.5cm} \text{almost surely.}
\end{equation*}
The random field $X$ is called the \textit{thermodynamic jamming limit} of the parking process on $\mathbb{Z}^d.$

In order to make the distinction later on, let us introduce the following notation for proportions of occupied sites, depending on whether we refer to the random field $X$ or to the free boundary sequence of vectors $X^{(0)}_{\Lambda_n},n\ge1$:
\[
\rho_n:=\frac{N_n}{|\Lambda_n|}:=\frac{\sum_{i\in \Lambda_n}X(i)}{|\Lambda_n|}\,\,\,\text{and }\,\,\,\bar\rho_n:=\frac{\bar N_n}{|\Lambda_n|}:=\frac{\sum_{i\in \Lambda_n}X^{(0)}_{\Lambda_n}(i)}{|\Lambda_n|}
\]

\vspace{0.2cm}

\subsection{Main results for any $d\geq1$}
Here we state the results we have proved in any dimension. We start with asymptotic properties.  The strong law of large numbers for the \textit{occupation density} for the thermodynamic jamming limit
\begin{equation}\label{rho}
\lim_{n\to \infty}\rho_n=\mathbb{E}[X({\bf0})]=:\rho,
\hspace{0.5cm} \text{almost surely,}
\end{equation}
was  proved by  Ritchie~\cite[Theorem 42]{MR2205908}. 
In the next result, we provide the Central Limit Theorem and a Law of Iterated Logarithm for the number of occupied sites in the box $\Lambda_n$ by the thermodynamic jamming limit on $\mathbb{Z}^d$.

\begin{theorem}\label{main}
Let $N_n=\sum_{i\in \Lambda_n}X(i)$ be the number of occupied sites in $\Lambda_n$ (relative to the thermodynamic jamming limit $X$). It satisfies the central limit theorem,
$$
\frac{N_n-|\Lambda_n|\rho}{\sqrt{\mathrm{Var}(N_n)}}\overset{\mathcal{D}}{\underset{n\to\infty}{\longrightarrow}} N(0,1),
$$
where $\rho$ is defined in (\ref{rho}) and the law of iterated logarithm
\[
\mathbb P\left(\limsup_n\frac{N_n-|\Lambda_n|\rho}{\sqrt{2\mathrm{Var}(N_n)\log\log |\Lambda_n|}}=1\right)=1.  
\]
\end{theorem}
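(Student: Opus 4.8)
The plan is to exploit Ritchie's representation of the thermodynamic jamming limit as a deterministic and \emph{local} function of the i.i.d.\ field $U=\{U(i)\}_{i\in\mathbb Z^d}$: since $X(i)=X^{(0)}_{\mathcal A(\{i\})}(i)$ depends only on $U$ restricted to the armour $\mathcal A(\{i\})$, it suffices to quantify how fast this armour localizes. Let $R_i$ be the smallest $r\ge1$ such that no strictly decreasing $U$-path starting at $i$ and staying in $\Lambda_r(i)$ meets $\partial\Lambda_r(i)$. Then $\{R_i\le r\}$ is a function of $(U(j))_{j\in\Lambda_r(i)}$, and on this event $\mathcal A(\{i\})\subseteq\Lambda_r(i)$, so $X(i)$ too is a function of $(U(j))_{j\in\Lambda_r(i)}$. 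Moreover, since the expected number of vertices joined to $i$ by a decreasing path of length $\ell$ is at most $(2d)^\ell/\ell!$ (a strictly decreasing path visits distinct vertices), and meeting $\partial\Lambda_r(i)$ requires length at least $r$, one gets $\mathbb P(R_{\mathbf 0}>r)\le\sum_{\ell\ge r}(2d)^\ell/\ell!$, which decays super-exponentially; this quantitative refinement of \cite[Lemma~31]{MR2205908} is the only input from \cite{MR2205908} needed here.

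From this localization I would derive, by a routine re-randomization coupling, that $X$ is a stationary random field whose mixing coefficients relevant for the CLT decay faster than any power of the distance: for finite $S,T$ with $\mathrm{dist}(S,T)=m$ one couples $X$ on $T$ with a version depending only on $U$ on $\bigcup_{j\in T}\Lambda_{\lfloor m/2\rfloor}(j)$, the total-variation error being controlled by $\mathbb P(R_{\mathbf 0}>\lfloor m/2\rfloor)$ summed over the finitely many (or widely spaced) relevant vertices. In the same way $|\mathrm{Cov}(X(\mathbf 0),X(k))|\le C\,\mathbb P(R_{\mathbf 0}>c\|k\|_\infty)$, which is summable over $k\in\mathbb Z^d$; hence by translation invariance and dominated convergence $\mathrm{Var}(N_n)/|\Lambda_n|\to\sigma^2:=\sum_{k\in\mathbb Z^d}\mathrm{Cov}(X(\mathbf 0),X(k))\in[0,\infty)$. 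Also $\mathbb E[X(i)]=\rho$ for every $i$, so $\mathbb E[N_n]=|\Lambda_n|\rho$ exactly, which is the centering appearing in the statement.

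The main obstacle is the non-degeneracy $\sigma^2>0$. I would obtain it from a Hoeffding/ANOVA conditional-variance bound: decomposing $U$ into the values $(U(i))_{i\in I\cap\Lambda_n}$ on a sublattice $I$ of spacing $K$ and everything else, orthogonality of the first-order Hoeffding components gives $\mathrm{Var}(N_n)\ge\sum_{i\in I\cap\Lambda_n}\mathrm{Var}\big(\mathbb E[N_n\mid U(i)]\big)$. For $i$ far from $\partial\Lambda_n$, translation invariance together with the armour tail give $\mathbb E[N_n\mid U(i)]-|\Lambda_n|\rho\to h(U(\mathbf 0))$ in $L^2$, where $h(u):=\sum_{j\in\mathbb Z^d}\big(\mathbb E[X(j)\mid U(\mathbf 0)=u]-\rho\big)$ is an absolutely and uniformly convergent series; so the problem reduces to showing $h$ is not a.s.\ constant, i.e.\ (since $\mathbb E[h(U(\mathbf 0))]=0$) that $h\not\equiv0$. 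Here one uses that as $u\downarrow0$ the origin is processed essentially first, hence occupied with probability at least $1-2du\to1$, forcing its neighbours to be empty; thus $\mathbb E[X(\mathbf 0)\mid U(\mathbf 0)=u]\to1$ and $\mathbb E[X(j)\mid U(\mathbf 0)=u]\to0$ for $j\sim\mathbf 0$, while the influence of $U(\mathbf 0)$ on sites at distance $\ge2$ runs through long decreasing paths and is controlled by the armour tail; a careful bookkeeping of these terms yields $\lim_{u\downarrow0}h(u)\ne0$. This verification of non-degeneracy is the delicate step; in the case $d=1$ it can alternatively be extracted from the explicit computations carried out later in the paper. It gives $\mathrm{Var}(N_n)\ge c|\Lambda_n|$, hence $\sigma^2>0$ and $\mathrm{Var}(N_n)\sim\sigma^2|\Lambda_n|$.

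Given super-polynomial mixing together with $\mathrm{Var}(N_n)\sim\sigma^2|\Lambda_n|$ and $\sigma^2>0$, both assertions follow from the limit theorems for stationary weakly dependent random fields. For the central limit theorem I would invoke a CLT for stationary $\alpha$-mixing random fields of Bolthausen type — whose hypotheses (summability of the $m^{d-1}$-weighted mixing coefficients, bounded summands, $\sigma^2>0$) we have checked — to obtain $|\Lambda_n|^{-1/2}(N_n-|\Lambda_n|\rho)\Rightarrow N(0,\sigma^2)$, and then pass to the normalization $\sqrt{\mathrm{Var}(N_n)}$ via $\mathrm{Var}(N_n)/|\Lambda_n|\to\sigma^2$ and Slutsky. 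For the law of the iterated logarithm I would either appeal directly to a Hartman–Wintner LIL for exponentially $\alpha$-mixing stationary random fields, or reduce to the classical LIL for $m$-dependent fields through the stationary truncation $X^{(r)}(i):=X(i)\mathbf{1}_{\{R_i\le r\}}$, which is $(2r+1)$-dependent and satisfies $\mathrm{Var}\big(\sum_{i\in\Lambda_n}(X-X^{(r)})(i)\big)\le|\Lambda_n|\,\varepsilon(r)$ with $\varepsilon(r)\to0$ (combining Cauchy–Schwarz on nearby pairs with the covariance bound on distant pairs); letting first $n\to\infty$ and then $r\to\infty$, and using $\mathrm{Var}(N_n)\sim\sigma^2|\Lambda_n|$ with $\log\log|\Lambda_n|\sim\log\log n$, one obtains $\limsup_n(N_n-|\Lambda_n|\rho)/\sqrt{2\,\mathrm{Var}(N_n)\log\log|\Lambda_n|}=1$ almost surely.
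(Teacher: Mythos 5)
Your overall architecture coincides with the paper's: localize $X(i)$ via the armour and its super-exponential tail, deduce fast decay of the mixing coefficients by a re-randomization coupling, identify $\mathrm{Var}(N_n)/|\Lambda_n|\to\sigma^2=\sum_k\mathrm{Cov}(X(\mathbf 0),X(k))$, and then invoke a Bolthausen-type CLT and an LIL for mixing stationary fields. Those parts are sound and essentially match Lemmas on $\alpha_{k,l}(n)$, $\alpha_{1,\infty}(n)$ and the variance lemma in the paper.

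The genuine gap is in your non-degeneracy argument for $\sigma^2>0$. The Hoeffding/ANOVA reduction to showing that $h(u)=\sum_{j\in\mathbb Z^d}\bigl(\mathbb E[X(j)\mid U(\mathbf 0)=u]-\rho\bigr)$ is non-constant is legitimate, but the step where you conclude $\lim_{u\downarrow 0}h(u)\neq 0$ is asserted, not proved, and it is not obviously true. As $u\downarrow 0$ the first $2d+1$ terms contribute $1-(2d+1)\rho$, which is negative already for $d=1$ (where $\rho=\tfrac12(1-e^{-2})$), and every site $j$ at moderate distance contributes an $O(1)$ correction: for instance in $d=1$ the sites $j\ge 2$ see a half-line parking process with a free boundary at $2$, whose surface correction to the density is a nontrivial convergent series of mixed signs. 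The armour tail only guarantees absolute convergence of $h$, not a sign for its value; without an explicit computation (feasible in $d=1$, but not offered for $d\ge 2$) the limit could in principle vanish, and even if it did not vanish at $u=0^+$ one would still have to rule out $h$ being a.s.\ equal to a nonzero constant, which is impossible only because $\int_0^1 h=0$ --- so the whole burden rests on the unverified claim $h\not\equiv 0$. The paper avoids this issue entirely with Penrose's ``good box'' device: it conditions on a block event (all $U$-values on a ring $A\subset\Lambda_3(7\kappa)$ exceeding those on $\Lambda_3(7\kappa)\setminus A$) under which the occupation count of the central plus-shape is a genuinely two-valued random variable with explicitly computable conditional law, so that $\mathbb E[\mathrm{Var}(N_{7n+3}\mid\mathcal F_{7n+3})]\ge c\,n^2$ with no series to evaluate. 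You should either carry out the bookkeeping for $h(0^+)$ in each dimension or replace the single-site conditioning by a block conditioning of this type. A secondary, smaller point: your alternative LIL route via $(2r+1)$-dependent truncations needs an almost-sure (not just in-probability) control of the error field under the $\sqrt{|\Lambda_n|\log\log|\Lambda_n|}$ normalization, i.e.\ an LIL-type upper bound for the residual, which you do not supply; citing an LIL for mixing random fields directly, as the paper does with Nahapetian's theorem, is the cleaner option.
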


Next we state a result quantifying precisely the probability that the proportion deviates from its mean in finite boxes $\Lambda_n$.

\begin{proposition}\label{prop:concentration}
For any $\epsilon>0,n,d\ge1$
\begin{equation}\label{eq:concentrationmain}
\mathbb P\left(\left|{N_n}-\rho{|\Lambda_n|}\right|>\epsilon\right)\le  e^{\frac{1}{e}-\frac{\epsilon^2}{4eB|\Lambda_n|}}
\end{equation}
where 
\[
B=1+\frac{2d}{2d-1}\sum_{k\ge1}\frac{(2d-1)^{k}[(2k+1)^d-(2k-1)^d]}{k!}.
\]
\end{proposition}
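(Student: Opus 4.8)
The plan is to express $N_n$ as a function of the i.i.d.\ field $U$ of Definition~\ref{P.P.finite} and then invoke a concentration inequality for functions of independent random variables whose single-coordinate increments, while not uniformly bounded, are controlled in an exponential-moment sense by the sizes of the armours. First, by Ritchie's construction $X(i)=X^{(0)}_{\mathcal A(\{i\})}(i)$ is a deterministic function of $\{U(j):j\in\mathcal A(\{i\})\}$, and $\mathcal A(\{i\})$ is almost surely finite by \cite[Lemma~31]{MR2205908}; hence $N_n=\sum_{i\in\Lambda_n}X(i)$ is a.s.\ a function of finitely many of the variables $U(j),\ j\in\mathbb Z^d$. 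Fixing an enumeration $\mathbb Z^d=\{j_1,j_2,\dots\}$ and writing $M_k:=\mathbb E[N_n\mid U(j_1),\dots,U(j_k)]$, we get the martingale-difference representation $N_n-\mathbb E N_n=\sum_{k\ge1}(M_k-M_{k-1})$.

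The key structural input, coming from the monotone-path description of the armour in Ritchie's algorithm, is a locality-under-perturbation statement: if $U,U'$ agree off a single site $j$, then $X(i)(U)\ne X(i)(U')$ forces $j\in\mathcal A(\{i\})(U)$ or $j\in\mathcal A(\{i\})(U')$, i.e.\ the existence of a monotone $U$- or $U'$-path from $i$ to $j$. Taking the supremum over the value of $U'(j)$, and using the symmetry $U\mapsto\mathbf{1}-U$ (which swaps increasing and decreasing paths and preserves the law), one bounds the number of indices $i\in\Lambda_n$ that a modification of $U(j)$ can affect by a quantity $\widehat W_j=\widehat W_j(U)$ depending on $U$ alone: a sum of at most $2d+1$ ``ancestor-set'' cardinalities $|\{i\in\Lambda_n:U(i\downarrow j')\}|$ for $j'$ equal to $j$ or adjacent to $j$, each stochastically dominated by $|\mathcal A(\{\mathbf{0}\})|$. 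Since, conditionally on the past, $M_k-M_{k-1}$ is centred with $|M_k-M_{k-1}|\le\mathbb E[\widehat W_{j_k}\mid\mathcal F_{j_1,\dots,j_{k-1}}]=:D_k$, Hoeffding's lemma together with iteration and Jensen's inequality yields
\[
\mathbb E\!\left[e^{\lambda(N_n-\mathbb E N_n)}\right]\ \le\ \mathbb E\!\left[\exp\!\Big(\tfrac{\lambda^2}{2}\textstyle\sum_{k\ge1}D_k^2\Big)\right]\ \le\ \mathbb E\!\left[\exp\!\Big(\tfrac{\lambda^2}{2}\textstyle\sum_{j\in\mathbb Z^d}\widehat W_j^{\,2}\Big)\right].
\]

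It thus remains to bound $\mathbb E\big[\exp\big(s\sum_j\widehat W_j^{\,2}\big)\big]$ for $s>0$ in the relevant range. Exchanging orders of summation gives $\sum_j|\{i\in\Lambda_n:j\in\mathcal A(\{i\})\}|=\sum_{i\in\Lambda_n}|\mathcal A(\{i\})|$, a sum of $|\Lambda_n|$ identically distributed terms; this, combined with the \emph{super}-exponential decay of the armour-size law — which a union bound over non-backtracking nearest-neighbour walks reduces to the facts that a fixed such walk of length $k$ is monotone under $U$ with probability at most $1/k!$, that there are at most $2d(2d-1)^{k-1}$ of them from a given site, and that the $\ell^\infty$-sphere of radius $k$ around it has $(2k+1)^d-(2k-1)^d$ points — together with a Poisson-type domination for the monotone-walk counts, leads to an estimate of the form $\mathbb E\big[\exp\big(s\sum_j\widehat W_j^{\,2}\big)\big]\le\exp\big(\tfrac1e+2eB|\Lambda_n|\,s\big)$ with $B$ as in the statement. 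Inserting $s=\lambda^2/2$ into the previous display, then applying Markov's inequality and optimising over $\lambda$ (the optimum being $\lambda=\epsilon/(2eB|\Lambda_n|)$), gives the bound $e^{\frac1e-\epsilon^2/(4eB|\Lambda_n|)}$ for each of $\mathbb P\big(\pm(N_n-\rho|\Lambda_n|)>\epsilon\big)$, and hence \eqref{eq:concentrationmain}.

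The main obstacle is the third step. Because the armours are almost surely finite but \emph{unbounded}, the classical bounded-differences (McDiarmid) inequality does not apply: the ``variance proxy'' $\sum_j\widehat W_j^{\,2}$ is genuinely random, and one must show it is $O(|\Lambda_n|)$ with exponential moments good enough that no spurious $\log|\Lambda_n|$ factor creeps into the exponent. This is precisely where the fast ($1/k!$) decay of $\mathbb P(|\mathcal A(\{\mathbf{0}\})|>k)$ and a loss-free bookkeeping of the coordinate increments are needed, and where the explicit constant $B$ — along with the numerical factors $\tfrac1e$ and $4e$ — gets pinned down. A secondary, purely technical point is the careful formulation of the locality-under-perturbation claim used in the second step, which rests on the monotone-path characterisation of $\mathcal A(\{i\})$.
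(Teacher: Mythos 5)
Your proposal takes a genuinely different route from the paper, and it contains genuine gaps at its two central steps. The paper's proof is short: it introduces the mixing coefficient $\phi_{\infty,1}(k)=\sup\{|\mathbb P(X(\mathbf 0)=1\mid A)-\mathbb P(X(\mathbf 0)=1)|:A\in\mathcal F_{\Lambda_k^c}\}$, invokes a ready-made Gaussian concentration inequality of Dedecker (\cite[Corollary 4(b)(i)]{dedecker2001exponential}) valid whenever $B=1+\sum_{k\neq 0}\phi_{\infty,1}(\|k\|)<\infty$, and then the only work is to bound $\phi_{\infty,1}(k)$ by $\tfrac{2d(2d-1)^{k-1}}{k!}$ via the armour (if $\mathcal A(\{\mathbf 0\})\subset\Lambda_k$, then $X(\mathbf 0)$ is insensitive to the configuration off $\Lambda_k$) and sum over spheres to get the stated $B$. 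The constants $e^{1/e}$ and $4e$ come verbatim from Dedecker's theorem; $B$ is a sum of mixing coefficients, not an exponential moment of armour sizes.

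The first gap in your argument is the moment-generating-function step. With increments bounded only by a \emph{random}, $\mathcal F_{j_1,\dots,j_{k-1}}$-measurable quantity $D_k$, the conditional Hoeffding bound $\mathbb E[e^{\lambda(M_k-M_{k-1})}\mid\mathcal F_{k-1}]\le e^{\lambda^2D_k^2/2}$ does \emph{not} iterate to $\mathbb E[e^{\lambda(N_n-\mathbb E N_n)}]\le\mathbb E[e^{\frac{\lambda^2}{2}\sum_kD_k^2}]$: after one step the factor $e^{\lambda^2D_n^2/2}$ is not measurable with respect to the earlier $\sigma$-algebras, so the telescoping stalls. What iteration actually gives is the supermartingale statement $\mathbb E[e^{\lambda(N_n-\mathbb E N_n)-\frac{\lambda^2}{2}\sum_kD_k^2}]\le 1$, and decoupling the two factors from there (e.g.\ by Cauchy--Schwarz) degrades the constants; ``Jensen'' does not bridge this. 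The second, and larger, gap is the one you yourself flag: the estimate $\mathbb E[\exp(s\sum_j\widehat W_j^{\,2})]\le\exp(\tfrac1e+2eB|\Lambda_n|s)$ is asserted, not proved, and it is the entire content of the argument. Your interchange-of-summation remark controls $\sum_j\widehat W_j=\Theta(\sum_{i\in\Lambda_n}|\mathcal A(\{i\})|)$, but the variance proxy involves the \emph{squares} $\widehat W_j^{\,2}$, where a single large armour contributes quadratically and the terms are strongly dependent; obtaining a linear-in-$s$ log-moment bound with precisely the constant $2eB$ (for $B$ defined as in the statement) would be a substantial piece of work and there is no reason to expect these exact constants to emerge from this route. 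A third, smaller point: the locality-under-perturbation claim (that resampling $U(j)$ can only change $X(i)$ when $j$ lies in one of the two armours of $i$) is plausible from Ritchie's construction but is also left unverified. As it stands, the proposal is a program whose hardest steps are acknowledged but not carried out, whereas the paper sidesteps all of them by outsourcing the concentration mechanism to a mixing-based inequality.
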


Such inequalities are reminiscent of the so-called Azuma inequalities for martingales. It is called sub-gaussian, because the upper bound essentially recovers the optimal exponential exponent $\epsilon^2/|\Lambda_n|$ we would get if we were quantifying the concentration of a sum of i.i.d. gaussian random variables around its mean. The constant $B$ is explicit although not trivial to get for general $d$, but note that for $d=1$, it simplifies to $B=4e-3$.

\vspace{0,2cm}

The above results hold for any dimension but are restricted to the study of the random field $X$. With respect to the sequence of free boundary fields $X_{\Lambda_n}^{(0)},n\ge1$,  Ritchie~\cite[Theorem 42]{MR2205908} proved a strong law of large numbers for $\frac{\bar N_n}{|\Lambda_n|}$ (to the same limiting value $\rho$) and  Penrose~\cite{MR1887532} proved a Central Limit Theorem. 
Penrose~\cite{MR1887532} also proved the convergence of $\frac{\bar N_n}{|\Lambda_n|},n\ge1$ in $L^p$ for any $p$ but did not provide bounds. The next result quantifies the deviation in mean  for the numbers of occupation $\bar N_n,n\ge1$.

\begin{proposition}\label{prop:mean-dev}
\begin{equation*}
\left|\,\mathbb E \bar N_n-|\Lambda_n|\rho\,\right|\le \left\{\begin{array}{ccc}
2(e-1),&\text{if $d=1$,}\\
\frac{2d(2d-1)^{n}}{(n+1)!}+(2d)^2\sum_{k=0}^{n-1}\frac{(2d-1)^{k}(2(n-k)+1)^{d-1}}{(k+1)!},&\text{if $d\ge1$.}
\end{array}\right.\end{equation*}
\end{proposition}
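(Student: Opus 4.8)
The plan is to compare, site by site, the free-boundary field $X^{(0)}_{\Lambda_n}$ with the thermodynamic field $X$, and then average the discrepancies over $\Lambda_n$. The starting point is the coupling furnished by Ritchie's construction: both $X^{(0)}_{\Lambda_n}$ and $X$ are deterministic functions of the \emph{same} i.i.d.\ family $U=\{U(i)\}_{i\in\mathbb Z^d}$. For a fixed site $i\in\Lambda_n$, the value $X(i)=X^{(0)}_{\mathcal A(\{i\})}(i)$ is determined by the armour $\mathcal A(\{i\})$. The key deterministic observation is a \emph{locality / stabilization} statement: if $\mathcal A(\{i\})\subset\Lambda_n$ (equivalently, the armour of $i$ does not reach the boundary $\partial\Lambda_n$ or exit $\Lambda_n$), then running the parking algorithm inside $\Lambda_n$ with free boundary produces the same outcome at $i$ as running it on the armour, i.e.\ $X^{(0)}_{\Lambda_n}(i)=X(i)$. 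This is essentially the content of why $\mathcal A(\{i\})$ is the ``right'' region in Ritchie's argument, and should follow from the order in which sites are examined (the minimal-$U$ rule) together with the definition of $U(i\downarrow j)$: the decision at $i$ only consults sites reachable by a decreasing $U$-chain. Hence
\begin{equation*}
\bigl|\,\mathbb E\bar N_n-\mathbb E N_n\,\bigr|\le\sum_{i\in\Lambda_n}\mathbb P\bigl(X^{(0)}_{\Lambda_n}(i)\neq X(i)\bigr)\le\sum_{i\in\Lambda_n}\mathbb P\bigl(\mathcal A(\{i\})\not\subset\Lambda_n\bigr),
\end{equation*}
and since $\mathbb E N_n=|\Lambda_n|\rho$ by stationarity and (\ref{rho}), the left-hand side is exactly the quantity to be bounded.

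The next step is to estimate $\mathbb P(\mathcal A(\{i\})\not\subset\Lambda_n)$. The event requires a decreasing $U$-chain $i=i_0,i_1,\dots,i_k$ with $\|i_{m}-i_{m-1}\|=1$ that leaves $\Lambda_n$; in particular the chain has length at least $k\ge \mathrm{dist}_{\max}(i,\partial\Lambda_n)+1=:r_i$, where $r_i$ is the $\ell^\infty$-distance from $i$ to the complement of $\Lambda_n$ (so $r_i=n+1-\|i\|_{\max}$ up to indexing). For a fixed nearest-neighbor path of length $k$, the probability that the $U$-values along it are strictly decreasing is $1/(k+1)!$ (all orderings of $k+1$ i.i.d.\ uniforms equally likely), and the number of nearest-neighbor paths of length $k$ starting at $i$ is at most $2d(2d-1)^{k-1}$. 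Summing over $k\ge r_i$ and over self-avoiding targets, a union bound gives something like
\begin{equation*}
\mathbb P\bigl(\mathcal A(\{i\})\not\subset\Lambda_n\bigr)\le\sum_{k\ge r_i}\frac{2d(2d-1)^{k-1}}{(k+1)!}.
\end{equation*}
Finally one groups the sites of $\Lambda_n$ by the value of $r_i$: the number of sites at $\ell^\infty$-distance exactly $r$ from $\Lambda_n^c$ is $(2(n+1-r)+1)^d-(2(n-r)+1)^d$, a layer of order $(2(n-r)+1)^{d-1}$. Plugging this in and exchanging the order of summation yields a double sum in $r$ and $k$; reorganizing it (setting $j=k-r$ or similar) produces the stated expression $\frac{2d(2d-1)^n}{(n+1)!}+(2d)^2\sum_{k=0}^{n-1}\frac{(2d-1)^k(2(n-k)+1)^{d-1}}{(k+1)!}$. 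For $d=1$ the layer size is constant ($=2$ for each $r$, or the single innermost site), the double sum collapses to $\sum_{k\ge1}\frac{2\cdot 1}{(k+1)!}\cdot(\text{number of layers})$-type bound, and a clean termwise estimate gives $2(e-1)$; I would present the $d=1$ case separately precisely because the geometric factor disappears and one can afford the crude bound $\sum_{r\ge1}\sum_{k\ge r}\frac{2}{(k+1)!}\le 2\sum_{m\ge 2}\frac{m-1}{m!}\le 2(e-1)$.

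The main obstacle I anticipate is the deterministic locality lemma ``$\mathcal A(\{i\})\subset\Lambda_n\Rightarrow X^{(0)}_{\Lambda_n}(i)=X(i)$'': one must argue carefully that the free-boundary dynamics restricted to $\Lambda_n$ and the armour-based definition agree on $i$, which requires checking that no site \emph{outside} $\Lambda_n$ can influence the decision at $i$ when the armour stays inside — this is where Ritchie's notion of armour does the work, and the cleanest route is to invoke the finite-region consistency already implicit in \cite{MR2205908} (namely that $X^{(0)}_{\Lambda}(i)=X^{(0)}_{\Lambda'}(i)$ whenever $\mathcal A(\{i\})\subset\Lambda\subset\Lambda'$), rather than re-deriving it. The combinatorial path-counting and the reorganization of the double sum are routine but fiddly; getting the exact constants $2d$, $(2d-1)^k$, $(k+1)!$ and the precise layer cardinalities to match the claimed closed form will take some bookkeeping, and the $d\ge1$ bound is clearly meant to be a (slightly lossy) clean upper bound rather than sharp.
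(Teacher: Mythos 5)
Your proposal follows essentially the same route as the paper: couple $X$ and $X^{(0)}_{\Lambda_n}$ through the same field $U$, bound $|\mathbb E\bar N_n-|\Lambda_n|\rho|$ by $\sum_{i\in\Lambda_n}\mathbb P(X(i)\ne X^{(0)}_{\Lambda_n}(i))\le\sum_{i\in\Lambda_n}\mathbb P(\mathcal A(\{i\})\not\subset\Lambda_n)$ via the locality of the armour, estimate the latter by counting decreasing nearest-neighbour paths ($2d(2d-1)^{k-1}$ paths, each decreasing with probability $1/(k+1)!$), and then sum over boundary layers $|\partial\Lambda_r|=(2r+1)^d-(2r-1)^d$, treating $d=1$ separately to get the sharper constant $2(e-1)$. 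This matches the paper's proof in all essentials, including the reliance on Ritchie's construction for the stabilization step.
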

Here also, the upper bound is not easy to get in general dimensions but gives the upper bound to $4(e-1)$ for $d=1$. Notice that Gerin~\cite{gerin2015page}  obtained a bound for $d=1$  which is much less precise than our. 
\vspace{0.2cm}

\subsection{Results specific to $d=1$}
Even though the study of the sequence $X_{\Lambda_n}^{(0)},n\ge1$ in $d=1$ has an extensive literature, most works are simulation based and few works obtain rigorous results on the statistical properties of the model. 
It seems that the rigorous calculation of $\rho=\frac{1}{2}(1-e^{-2})$ was done in the 40's as the limiting average proportion of occupied sites in the jamming limit, using the combinatorial techniques of Flory \cite{flory1939intramolecular}
and has been re-discovered several times, see for instance Fan and Percus \cite{MR1149487} (see also \cite{RevModPhys.65.1281} for a nice overview up to the 90's). 
It is important to reforce that at that time, $\rho$ was not proved to be the limit proportion of occupied sites still. 
Page \cite{10.2307/2983806} would prove the convergence of the sample mean to $\rho$ in probability but, 
as far as we know,  it is Ritchie~\cite{MR2205908} who was the first to actually prove the  almost sure convergence (his results hold in any dimension). To conclude, let us mention that the problem is considered in the literature under different perspectives, such as seating arrangement/fatmen/packing problem or even independents sets in graph theory   (see for instance \cite{flajolet, friedman1964solution, gerin2015page, pinsky2014one}).

Although the value of $\rho=\frac{1}{2}(1-e^{-2})$ is well-known, we found it interesting to mention that Ritchie's construction allows one more alternative calculation of this constant, using directly (\ref{rho}) without any limiting procedure, similar to the approach of \cite{gerin2015page}. 

\begin{proposition}\label{density1}
Let \(X:\mathbb{Z}\to\{0,1\}\) be the thermodynamic limit of the parking process on \(\mathbb{Z}\). Then,
\[\rho:=\mathbb{E}[X({0})]=\frac{1}{2}(1-e^{-2}).\]
\end{proposition}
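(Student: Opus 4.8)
The plan is to compute $\rho = \mathbb{P}(X(\mathbf{0}) = 1)$ directly from Ritchie's construction, exploiting the fact that in $d=1$ the armour $\mathcal{A}(\{0\})$ and the resulting occupation status of the origin admit a transparent description in terms of the i.i.d.\ uniform field $U$. Recall that $X(0) = X^{(0)}_{\mathcal{A}(\{0\})}(0)$, and that the parking process on a finite set declares the origin occupied precisely when, at the moment the origin is chosen (i.e., when $U(0)$ is the running minimum among sites not yet chosen), both neighbours $-1$ and $1$ are still empty. The key observation is that whether a neighbour is empty at that moment is governed by the relative order of the $U$-values along the two half-lines emanating from the origin: the origin is occupied if and only if neither $-1$ nor $1$ gets occupied before the origin is examined, and this can be decided by looking only at descending chains of $U$-values.

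Concretely, I would argue that $X(0)=1$ if and only if the following holds for \emph{both} directions $\pm$: writing the $U$-values as one walks away from the origin, $U(0) < U(\pm 1)$, \emph{or} more generally there is no site $\pm k$ ($k \ge 1$) that becomes occupied and blocks the origin. The cleanest route is a recursive/renewal argument. Let $p = \mathbb{P}(X(0) = 1)$ and condition on the value $U(0) = u$. Given $U(0) = u$, the origin survives to be examined with $-1$ still empty iff the site $-1$ is not occupied before the origin; by the structure of the process this event depends only on the configuration of $U$-values on $\{-1, -2, \ldots\}$, and similarly for the right side, and the two sides are conditionally independent given $U(0) = u$. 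So $p = \int_0^1 q(u)^2\, du$ where $q(u) = \mathbb{P}(\text{site } 1 \text{ is empty when the origin is examined} \mid U(0) = u)$. I would then set up an integral recursion for $q(u)$ by conditioning on $U(1)$: if $U(1) > u$ the origin is examined before site $1$ and the neighbour is still empty, contributing the event $\{U(1)>u\}$; if $U(1) = v < u$, then site $1$ is examined first, and site $1$ becomes occupied — thereby permanently blocking the origin — unless site $1$ is itself blocked by site $2$ becoming occupied first, which recursively has the same structure shifted by one. This yields a Volterra-type integral equation, something like $q(u) = (1-u) + \int_0^u (1 - q_{\text{right}}(v))\,dv$ with the inner function satisfying the mirror recursion, which can be massaged into a first-order linear ODE.

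Carrying this out, I expect the ODE to be solvable in closed form with $q(u) = e^{-u}$ (or a simple variant), so that $p = \int_0^1 e^{-2u}\,du = \tfrac12(1 - e^{-2})$, matching the claim. An alternative and perhaps more robust packaging of the same idea is to use the well-known connection with R\'enyi's constant via the exponential generating structure: condition on the \emph{rank} of $U(0)$ among the finite (a.s.) armour and translate the descending-chain description into the classical recursion for the expected number of cars parked, but since we only need the one-site marginal, the two-sided independence trick above is more direct. I would also need to justify carefully the two technical points that make the recursion valid: first, that conditionally on $U(0)=u$ the events on the left and right half-lines are independent (this follows from the i.i.d.\ nature of $U$ together with the fact that the occupation status of the origin factorizes over the two neighbours), and second, that the relevant events are measurable with respect to finitely many $U$-values almost surely (this is exactly the a.s.\ finiteness of the armour, \cite[Lemma 31]{MR2205908}).

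The main obstacle I anticipate is making the recursion for $q(u)$ genuinely rigorous rather than heuristic — in particular, correctly identifying the blocking mechanism. One must be careful that when site $1$ is examined before the origin, it is \emph{not} automatic that site $1$ gets occupied: it is occupied only if both its neighbours (the origin and site $2$) are empty at that moment, and the origin is certainly empty then (it has not been examined yet, since $U(0) > U(1)$), so the only way site $1$ fails to be occupied is if site $2$ was occupied earlier. Tracking this correctly is what produces the right recursion, and getting the direction of the inequality and the conditioning event exactly right is the delicate bookkeeping step; once the integral equation is correctly stated, reducing it to an ODE and solving it is routine.
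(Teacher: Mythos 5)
Your proposal is correct, and it takes a genuinely different route from the paper. The paper works directly with the armour $\mathcal{A}(\{0\})$: it enumerates all possible shapes $\{-m,\ldots,n\}$, computes $\mathbb{P}(\mathcal{A}(\{0\})=\{-m,\ldots,n\})$ via descending-chain probabilities and a beta-type integral, observes that $X(0)=1$ exactly when both $m$ and $n$ are even, and sums the resulting double series using $\cosh$ and $\sinh$. You instead condition on $U(0)=u$, introduce the one-sided blocking probability $q(u)=\mathbb{P}(\text{site }1\text{ does not block the origin}\mid U(0)=u)$, and derive the Volterra equation $q(u)=1-\int_0^u q(v)\,dv$, whence $q(u)=e^{-u}$ and $\rho=\int_0^1 q(u)^2\,du=\tfrac12(1-e^{-2})$. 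Both arguments ultimately rest on the same two facts --- that, given $U(0)=u$, the left and right half-lines contribute independently (the paper uses this implicitly when its integrand factorizes in \eqref{repintegral}), and that the relevant events are determined by an a.s.\ finite descending run of $U$-values --- but your packaging is closer to the classical R\'enyi/Page/Gerin derivations and avoids the series manipulations, while the paper's computation yields the full law of the armour as a byproduct. Two small points to tighten: the ``mirror recursion'' remark is a misnomer, since by translation invariance the inner function in your integral equation is $q$ itself (the recursion is self-similar along one half-line, not reflected); and the conditional-independence and well-foundedness claims should be proved by the finite induction along the decreasing run $U(0)>U(1)>\cdots$, which shows that the event ``site $1$ blocks the origin'' is measurable with respect to $\sigma(U(1),U(2),\ldots)$ and the single comparison $U(1)<u$ --- exactly the bookkeeping you correctly identify as the delicate step.
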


The next result is a consequence of Theorem \ref{main} and Proposition \ref{prop:concentration} when we consider $\bar N_n,n\ge1$ instead  of $N_n,n\ge1$. As far as we know, the LIL and the concentration inequality are new results. 

\begin{proposition}\label{prop:d=1}
In $d=1$, the sequence $\bar N_n=\sum_{i\in \Lambda_n}X_{\Lambda_n}(i)$  satisfies the central limit theorem
\[
\frac{\bar N_n-\mathbb E(\bar N_n)}{\sqrt{|\Lambda_n|\sigma^2}}\overset{\mathcal{D}}{\underset{n\to\infty}{\longrightarrow}} N(0,1)
\]
and the law of iterated logarithm
\[
\mathbb P\left(\limsup_n\frac{\bar N_n-\mathbb E(\bar N_n)}{\sqrt{2|\Lambda_n|\sigma^2\log\log |\Lambda_n|}}=1\right)=1
\]
for some constant $\sigma^2\in(0,\infty)$.  
Moreover, for any $\epsilon>0,n\ge1$
\begin{equation}\label{eq:concentration}
\mathbb P\left(\left|{\bar N_n}-\rho|\Lambda_n|\right|>\epsilon\right)\le e^{\frac{1}{e}-\frac{\epsilon^2}{16e(4e-3)|\Lambda_n|}}+2\frac{1}{\lceil \epsilon/4+2\rceil!}.
\end{equation}

\end{proposition}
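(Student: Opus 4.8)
The plan is to transfer the three asymptotic/concentration statements already obtained for $N_n$ (Theorem \ref{main} and Proposition \ref{prop:concentration}) to $\bar N_n$, exploiting the fact that in $d=1$ the free-boundary jamming limit $X^{(0)}_{\Lambda_n}$ and the thermodynamic limit $X$ differ, inside $\Lambda_n$, only near the two endpoints. Concretely, I would first establish a \emph{finite-range decoupling} statement: in $d=1$, the restriction $X(\Lambda_n)$ is a function of the i.i.d.\ field $U$ on the (random, a.s.\ finite) armour $\mathcal A(\Lambda_n)$, and likewise $X^{(0)}_{\Lambda_n}(\Lambda_n)$ depends only on $U(\Lambda_n)$; moreover, on the event that the armour does not reach the boundary $\partial\Lambda_n$ — equivalently, that no descending $U$-path from $\Lambda_n$ exits $\Lambda_n$ — the two configurations agree on all of $\Lambda_n$ except possibly within some finite distance $R$ of the endpoints $\pm n$. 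The key quantitative input is a tail bound on the length of descending paths: the probability that there is a descending path of length $k$ starting at a given site is at most $1/k!$ (a standard record-type estimate, of the same flavour as the bound used to control $B$). Summing over the $O(1)$ sites near each endpoint gives $\mathbb P(X(\Lambda_n)\neq X^{(0)}_{\Lambda_n}(\Lambda_n)\text{ outside a radius-}R\text{ neighbourhood of }\partial\Lambda_n)\le C/R!$, hence
\[
\mathbb E\bigl|N_n-\bar N_n\bigr|\le 2R + C\,|\Lambda_n|/R!,
\]
and, optimising $R$ (or just taking $R$ growing slowly), $\mathbb E|N_n-\bar N_n|=O(1)$; this already recovers Proposition \ref{prop:mean-dev} in $d=1$ up to the constant.

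For the CLT and LIL, I would argue that $N_n-\bar N_n$ is a.s.\ bounded in probability at rate $o(\sqrt{|\Lambda_n|})$ — in fact the above $L^1$ bound plus Borel–Cantelli along a rapidly-growing subsequence, combined with monotonicity-type control between consecutive boxes, gives $(N_n-\bar N_n)/\sqrt{|\Lambda_n|}\to 0$ a.s.\ and in probability. Since Theorem \ref{main} gives $\mathrm{Var}(N_n)\sim \sigma^2|\Lambda_n|$ for some $\sigma^2\in(0,\infty)$ (the mixing/summability of covariances coming from the armour construction yields $\mathrm{Var}(N_n)/|\Lambda_n|\to\sigma^2$, and $\sigma^2>0$ because $N_n$ is genuinely fluctuating), Slutsky's lemma transfers the CLT from $N_n$ to $\bar N_n$ with the same centering $\mathbb E\bar N_n$ and scaling $\sqrt{|\Lambda_n|\sigma^2}$. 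For the LIL, the almost-sure negligibility of $(N_n-\bar N_n)/\sqrt{2|\Lambda_n|\sigma^2\log\log|\Lambda_n|}$ lets one carry over the $\limsup=1$ statement verbatim. For the concentration inequality I would split
\[
\bigl\{|\bar N_n-\rho|\Lambda_n||>\epsilon\bigr\}\subset\bigl\{|N_n-\rho|\Lambda_n||>\epsilon/2\bigr\}\cup\bigl\{|N_n-\bar N_n|>\epsilon/2\bigr\},
\]
bound the first event by Proposition \ref{prop:concentration} with $d=1$ (where $B=4e-3$), and bound the second by the descending-path tail estimate: $|N_n-\bar N_n|>\epsilon/2$ forces a descending $U$-path of length at least $\sim\epsilon/4$ emanating from one of the two endpoint regions, whose probability is $\le 2/\lceil \epsilon/4+2\rceil!$. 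Matching the constants ($\epsilon/2$ inside the first term squared gives $\epsilon^2/4$, times the $4$ from Proposition \ref{prop:concentration} gives the $16$ in the denominator) reproduces \eqref{eq:concentration}.

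The main obstacle is the \emph{coupling/locality} step: making rigorous that $X$ and $X^{(0)}_{\Lambda_n}$ can be built on the \emph{same} $U$ so that, off an event of probability $\le C/R!$, they coincide on $\Lambda_n$ away from $\partial\Lambda_n$. This requires a careful look at Ritchie's armour construction in $d=1$ — one must check that the parking decisions inside $\Lambda_n$ depend only on the $U$-values seen along descending paths, so that if no such path from a site $i$ reaches $\Lambda_n^c$, then the decision at $i$ is identical in both processes — and that the only way discrepancies propagate inward is via a chain of overturned decisions, which again costs a descending path and hence a factorial-small probability. Obtaining the \emph{explicit} constants in \eqref{eq:concentration} (as opposed to an $O(1/R!)$ bound) is a matter of bookkeeping: tracking that it is exactly the two sites adjacent to $\partial\Lambda_n$ that matter and that the relevant path length threshold is $\lceil\epsilon/4+2\rceil$.
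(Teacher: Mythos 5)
Your proposal is correct and follows essentially the same route as the paper: transfer the CLT/LIL from $N_n$ to $\bar N_n$ via Slutsky and Borel--Cantelli using the fact that in $d=1$ the first local minima of $U$ near the two endpoints shield the boundary effects, so that $|N_n-\bar N_n|$ has factorial tails (the paper formalizes your radius-$R$ decoupling via the stopping times $T_L,T_R$ with $\mathbb P(T_L>i)=1/(i+2)!$), and then obtain \eqref{eq:concentration} by the same $\epsilon/2$ split with $B=4e-3$. The only cosmetic difference is that the paper applies Borel--Cantelli directly to $\sum_n\mathbb P(|N_n-\bar N_n|>\sqrt{|\Lambda_n|})$, with no need for your subsequence-plus-monotonicity step.
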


The last statement can be used to consider the concentration around the mean $\mathbb E(\bar N_n)$. Indeed
\[
\mathbb P\left(\left|{\bar N_n}-\mathbb E(\bar N_n)\right|>\epsilon\right)\le \mathbb P\left(\left|{\bar N_n}-\rho|\Lambda_n|\right|>\epsilon-\left|\mathbb E(\bar N_n)-\rho|\Lambda_n|\right|\right)\le \mathbb P\left(\left|{\bar N_n}-\rho|\Lambda_n|\right|>\epsilon-2(e-1)\right)
\]
(using  by Proposition \ref{prop:mean-dev} for the second inequality) which can be bounded using \eqref{eq:concentration}. 

Notice also that for small $\epsilon$, \eqref{eq:concentration} is not quite a subgaussian inequality,  due to boundary effects. However, for large $\epsilon$, the second term may be smaller than the first one, thus yielding a subgaussian inequality. For instance, if we take $\epsilon=|\Lambda_n|$ we get that the first term is $e^{\frac{1}{e}-\frac{|\Lambda_n|}{16e(4e-3)}}$ while the second term is $2\frac{1}{\lceil |\Lambda_n|/4+2\rceil!}=o(e^{-|\Lambda_n|})$. 

\section{Proofs}
Subsections \ref{sec:aux_results}, \ref{sec:techlemmas} and \ref{proof_mainthm} are entirely devoted to the proof of Theorem~\ref{main}, and are presented in the  case $d=2$ in order to simplify the presentation, and because the dimension does not have importance in the precision of the results. On the other hand, the proofs of Propositions \ref{prop:concentration} and \ref{prop:mean-dev}, given in Subsections   \ref{sec:proof_conc} and \ref{sec:proof_expect} respectively, are done for any $d\ge1$, because the constants involved in the statements are dimension dependents. In particular, as we said, the case $d=1$ plays an important role. Finally, Propositions~\ref{density1} and \ref{prop:d=1}, which holds only for $d=1$, is given in Subsection~\ref{sec:proof_prop}.

\subsection{Auxiliary definitions and results} \label{sec:aux_results}
The strategy to prove Theorem~\ref{main} consists of verifying the sufficient conditions given classical results of the literature for both,  the central limit theorem and the law of iterated logarithm to hold for general stationary mixing random field. To describe these conditions, we need to introduce a piece of notation (recalling that we present the proof of Theorem~\ref{main} in the case $d=2$ for ease of presentation). 

\begin{definition}
 For \(G\subset \mathbb{Z}^2\), let \(\mathcal{F}_G\) be the \(\sigma\)-algebra generated by the random variables \(X(i)\), \(i\in G\). If \(G,H\subset \mathbb{Z}^2\), let
\begin{equation*}
d(G,H):=\inf\{ \|i-j\|_{\max}:i\in G,j\in H \}.
\end{equation*}
If \(n\in\mathbb{N}\) and \(k,l\in\mathbb{N}\cup \{ \infty\}\), we define the mixing coefficient
\[\alpha_{k,l}(n):=\sup\{ |\mathbb{P}(A \cap B)-\mathbb{P}(A)\mathbb{P}(B)|\},\]
where the supremum is taken over all \(A\in\mathcal{F}_{G}\), \(B\in\mathcal{F}_{H}\), \(G,H\subset \mathbb{Z}^2\), with \(|G|\leq k\), \(|H|\leq l\) and \(d(G,H)\geq n\).
\end{definition}

Bolthausen's theorem \cite{MR672305} establishes the following criteria for a stationary random field to satisfy a central limit theorem.
\begin{theorem}[Bolthausen~\cite{MR672305}]\label{ThmMain}
    Let $X$ be a stationary random field on $\mathbb{Z}^2$. Assume that  
    \begin{enumerate}[(a)]    
        \item $\sum_{n=1}^\infty n\alpha_{k,l}(n)<\infty$ for $k+l\leq 4$.
        \item For some $\delta>0$, $\|X({\bf0}) \|_{2+\delta}<\infty$ and         $\sum_{n=1}^{\infty}n \alpha_{1,1}(n)^{\delta/(2+\delta)}<\infty$.
        \item $\alpha_{1,\infty}(n)=o(n^{-2})$.        
    \end{enumerate}
    Then
    \begin{equation}\label{mainresult1}
        \sum_{i\in\mathbb{Z}^2}|\mathrm{Cov}(X({\bf0}),X(i))|<\infty.
    \end{equation}
    If in addition  $\sigma^2=\sum_{i\in\mathbb{Z}^2}\mathrm{Cov}(X({\bf 0}),X(i))>0$, then
    \begin{equation*}
        \frac{N_n-|\Lambda_n|\rho}{\sigma |\Lambda_n|^{1/2}}\overset{\mathcal{D}}{\underset{n\to\infty}{\longrightarrow}}N(0,1).
    \end{equation*}
\end{theorem}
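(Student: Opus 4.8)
The plan is to follow Bolthausen's original strategy: first deduce the absolute summability \eqref{mainresult1} of the covariances from a covariance inequality for $\alpha$-mixing random variables, and then prove the central limit theorem by Stein's method. Throughout, write $\bar X(i):=X(i)-\rho$, $S_n:=\sum_{i\in\Lambda_n}\bar X(i)=N_n-|\Lambda_n|\rho$ and $\sigma_n^2:=\mathrm{Var}(S_n)$. The goal is to show that $\sum_{i\in\mathbb Z^2}|\mathrm{Cov}(X({\bf0}),X(i))|<\infty$, that $\sigma_n^2/|\Lambda_n|\to\sigma^2:=\sum_{i\in\mathbb Z^2}\mathrm{Cov}(X({\bf0}),X(i))$, and, under the extra hypothesis $\sigma^2>0$, that $W_n:=S_n/\sigma_n$ converges in distribution to a standard normal.

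For the summability, I would invoke the classical covariance inequality for $\alpha$-mixing variables: if $Y$ is $\mathcal F_G$-measurable, $Z$ is $\mathcal F_H$-measurable and $d(G,H)\ge n$, then $|\mathrm{Cov}(Y,Z)|\le C\,\alpha_{|G|,|H|}(n)^{1-1/p-1/q}\|Y\|_p\|Z\|_q$. Applied to $Y=X({\bf0})$, $Z=X(i)$ with $p=q=2+\delta$ it gives $|\mathrm{Cov}(X({\bf0}),X(i))|\le C\,\|X({\bf0})\|_{2+\delta}^2\,\alpha_{1,1}(\|i\|_{\max})^{\delta/(2+\delta)}$, finite by hypothesis (b). Since $O(n)$ sites $i\in\mathbb Z^2$ have $\|i\|_{\max}=n$, summation yields $\sum_i|\mathrm{Cov}(X({\bf0}),X(i))|\le C'\sum_{n\ge1}n\,\alpha_{1,1}(n)^{\delta/(2+\delta)}<\infty$ by (b), which is \eqref{mainresult1}; the same bound together with translation invariance and dominated convergence shows that the boundary corrections in $\sigma_n^2=\sum_{i,j\in\Lambda_n}\mathrm{Cov}(X(i),X(j))$ are negligible, so $\sigma_n^2/|\Lambda_n|\to\sigma^2$.

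Assume now $\sigma^2>0$, so $\sigma_n\sim\sigma|\Lambda_n|^{1/2}$. By Stein's method it suffices to prove $\mathbb E[f'(W_n)-W_nf(W_n)]\to0$ for every bounded $f\in C^2$ with bounded derivatives. Choose a block radius $m=m_n\to\infty$ with $m_n=o(n)$, set $A_i:=\{j\in\Lambda_n:\|j-i\|_{\max}\le m_n\}$ and $\bar S_i:=\sigma_n^{-1}\sum_{j\in A_i}\bar X(j)$, and split
\[
\mathbb E[W_nf(W_n)]=\sigma_n^{-1}\sum_{i\in\Lambda_n}\mathbb E\big[\bar X(i)\big(f(W_n)-f(W_n-\bar S_i)\big)\big]+\sigma_n^{-1}\sum_{i\in\Lambda_n}\mathbb E\big[\bar X(i)f(W_n-\bar S_i)\big].
\]
In the second sum $\bar X(i)$ is $\mathcal F_{\{i\}}$-measurable, $f(W_n-\bar S_i)$ is $\mathcal F_{\Lambda_n\setminus A_i}$-measurable, and $d(\{i\},\Lambda_n\setminus A_i)\ge m_n$; since $\mathbb E\bar X(i)=0$, the covariance inequality with coefficient $\alpha_{1,\infty}(m_n)$ bounds each term, and the sum is $O\big(|\Lambda_n|^{1/2}\alpha_{1,\infty}(m_n)^{(1+\delta)/(2+\delta)}\big)=o(1)$ for $m_n$ a suitably large power of $n$, by hypothesis (c). In the first sum, the Taylor expansion $f(W_n)-f(W_n-\bar S_i)=\bar S_if'(W_n)-\tfrac12\bar S_i^2f''(\xi_i)$ yields a leading term $\mathbb E[f'(W_n)V_n]$ with $V_n:=\sigma_n^{-2}\sum_{i\in\Lambda_n}\sum_{j\in A_i}\bar X(i)\bar X(j)$, plus a remainder bounded by $\tfrac12\|f''\|_\infty\sigma_n^{-1}\sum_i\mathbb E[|\bar X(i)|\bar S_i^2]$. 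One checks $\mathbb E[V_n]\to1$ (from $\sigma_n^2/|\Lambda_n|\to\sigma^2$ and covariance summability) and $\mathrm{Var}(V_n)\to0$, a fourth-moment estimate in which the covariances of the products $\bar X(i)\bar X(j)$ are controlled by the mixing coefficients $\alpha_{k,l}$ with $k,l\le2$, hence by hypothesis (a). Then $V_n\to1$ in $L^2$, so $\mathbb E[f'(W_n)V_n]-\mathbb E[f'(W_n)]\to0$, and once the remainder is shown to vanish we get $\mathbb E[W_nf(W_n)]-\mathbb E[f'(W_n)]\to0$, i.e. $W_n\Rightarrow N(0,1)$.

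The main obstacle is the simultaneous calibration of the block radius $m_n$: decoupling $\bar X(i)$ from $f(W_n-\bar S_i)$ in the second sum forces $m_n$ to grow faster than a certain power of $n$ (dictated by (c)), whereas the crude bound on the Taylor remainder $\sigma_n^{-1}\sum_i\mathbb E[|\bar X(i)|\bar S_i^2]$ grows with $m_n$ and would require $m_n$ smaller — and under the naive estimates these requirements are incompatible. Closing this gap, by exploiting the mixing structure \emph{inside} the remainder (and coping with the fact that only $2+\delta$ moments are available, so third absolute moments of $X({\bf0})$ may be infinite), is the delicate heart of the argument, and it is precisely where hypotheses (a), (b) and (c) must be used in concert. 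I note in passing that in the intended application the field $X$ is $\{0,1\}$-valued, so all moments are finite and the remainder estimates simplify considerably; only the general statement requires the full $L^{2+\delta}$ machinery.
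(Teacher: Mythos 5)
First, a point of context: the paper does not prove this statement at all. Theorem~\ref{ThmMain} is imported verbatim from Bolthausen~\cite{MR672305} and used as a black box, so there is no internal proof to compare yours against; the authors' ``proof'' is the citation. What you have written is a reconstruction of Bolthausen's own argument, and as a road map it is accurate: the covariance inequality with exponent $\delta/(2+\delta)$ under hypothesis (b) does yield \eqref{mainresult1} and $\mathrm{Var}(S_n)/|\Lambda_n|\to\sigma^2$; Stein's method with a near/far block splitting is the correct framework; and you assign the hypotheses to the right terms (hypothesis (c) to the decoupling of $\bar X(i)$ from $f(W_n-\bar S_i)$, hypothesis (a) to $\mathrm{Var}(V_n)\to 0$).

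As a proof, however, your text has a genuine, self-acknowledged gap at the Taylor remainder $\tfrac12\|f''\|_\infty\sigma_n^{-1}\sum_i\mathbb E[|\bar X(i)|\bar S_i^2]$. Your own crude count makes this of order $m_n^4/n$ (there are $|\Lambda_n|\sim n^2$ sites, $|A_i|^2=O(m_n^4)$ pairs inside each block, and a factor $\sigma_n^{-3}\sim n^{-3}$), forcing $m_n=o(n^{1/4})$, while hypothesis (c) alone forces $m_n\gg n^{(2+\delta)/(2(1+\delta))}\ge n^{1/2}$ in the far-field term. These are incompatible, exactly as you observe, and you do not resolve the incompatibility; you only assert that resolving it is ``the delicate heart of the argument.'' That is precisely the step a proof must supply. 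Bolthausen closes it by \emph{not} bounding the remainder by the raw pair count: he truncates $X$ at a level $M$ (this is where having only $2+\delta$ moments matters) and, for the bounded part, controls each triple $\mathbb E[|\bar X(j)|\bar X(i)\bar X(k)]$ by applying the covariance inequality to whichever site is farthest from the other two, so that the triple sum is governed by series of the form $\sum_n n\,\alpha_{k,l}(n)^{\delta/(2+\delta)}$ with $k+l\le 4$ rather than by $m_n^4$. Note also that your closing remark --- that in the application $X$ is $\{0,1\}$-valued so ``the remainder estimates simplify considerably'' --- does not dissolve the difficulty: boundedness does not reduce the number of pairs in $\bar S_i^2$, and the tension in calibrating $m_n$ persists; only the mixing structure inside the triple sum removes it. So either cite the theorem, as the paper does, or carry out this remainder estimate explicitly; the sketch as written does neither.
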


For the law of iterated logarithm, we will rely on the following result of \cite{nahapetian2013limit} (see Theorem 7.4.2 therein).

\begin{theorem}[Nahapetian~\cite{nahapetian2013limit}]\label{nahap}
    Let $X$ be a stationary random field on $\mathbb{Z}^2$. Assume that
   
    \begin{enumerate}[(a)]
       
        \item $\mathbb E |X|^{2+\delta}<\infty$ for some $\delta>0$,
       
        \item $\alpha_{k,l}(n)\le k^{\tau_1}l^{\tau_2}\alpha(n)$ for some constants $\tau_1,\tau_2$ and vanishing sequence $\alpha (n)$.
       
        \item The sequence $\alpha(n),n\ge1$ satisfies:
        \begin{itemize}
        \item for some $\delta',0<\delta'<\delta$, the series $\sum_{n\ge1}n\alpha(n)^{\frac{\delta'}{2+\delta'}}<\infty$,
        \item $\alpha(n)=O(\frac{1}{n^{2\beta }})$ where $\beta>2(\tau_1+\tau_2)+\frac{1}{2}$.
        \end{itemize}

    \end{enumerate}
    Then
    \begin{equation}\label{mainresult12}
        \sum_{i\in\mathbb{Z}^2}|\mathrm{Cov}(X({\bf0}),X(i))|<\infty.
    \end{equation}
    If in addition  $\sigma^2=\sum_{i\in\mathbb{Z}^2}\mathrm{Cov}(X({\bf 0}),X(i))>0$ and $\mathrm{Var}(N_n)=\sigma^2|\Lambda_n|(1+o(1))$ then
    \begin{equation*}
\mathbb P\left(\limsup_n\frac{N_n-|\Lambda_n|\rho}{\sqrt{2\sigma^2|\Lambda_n|\log\log |\Lambda_n|}}=1\right)=1.  
  \end{equation*}
\end{theorem}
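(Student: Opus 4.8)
The plan is to first establish absolute summability of the covariances, and then to prove the law of the iterated logarithm by a blocking argument combined with exponential inequalities and the two Borel--Cantelli lemmas. For the first conclusion I would invoke a Davydov/Ibragimov-type covariance inequality: for the strong mixing coefficient $\alpha_{1,1}$ one has $|\mathrm{Cov}(X(\mathbf 0),X(i))|\le C\,\|X(\mathbf 0)\|_{2+\delta}^2\,\alpha_{1,1}(\|i\|_{\max})^{\delta/(2+\delta)}$. Hypothesis (a) makes the moment factor finite and (b)--(c) give $\alpha_{1,1}(n)\le C\alpha(n)$; summing over $i\in\mathbb Z^2$ and grouping the $O(n)$ sites at max-distance $n$, the series is dominated by $\sum_n n\,\alpha(n)^{\delta/(2+\delta)}$. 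Since $\alpha(n)\le1$ and $\delta'<\delta$ give $\alpha(n)^{\delta/(2+\delta)}\le\alpha(n)^{\delta'/(2+\delta')}$, this is bounded by $\sum_n n\,\alpha(n)^{\delta'/(2+\delta')}<\infty$ from the first bullet of (c), yielding (\ref{mainresult12}).

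For the LIL I would partition $\Lambda_n$ into large sub-boxes of side $p=p(n)$ separated by corridors of width $q=q(n)$ with $q\to\infty$ and $q/p\to0$, writing $S_n:=N_n-|\Lambda_n|\rho$ as a sum of block-sums $Y_j$ plus a corridor remainder $R_n$. The corridor volume is $O(|\Lambda_n|\,q/p)=o(|\Lambda_n|)$, so a second-moment estimate using the summable covariances gives $\mathbb E R_n^2=o(|\Lambda_n|)$ and hence $R_n=o(\sqrt{|\Lambda_n|\log\log|\Lambda_n|})$ almost surely along a geometric subsequence. Using the mixing bound (b) at separation $q$, I would couple $\{Y_j\}$ with an independent family $\{\tilde Y_j\}$ of the same marginals (Berbee / Berkes--Philipp coupling), the total coupling error being controlled by the number of blocks times $\alpha(q)$; for the decoupled sum a Bernstein-type exponential inequality (after a truncation legitimated by the $(2+\delta)$-moment) gives $\mathbb P\bigl(\tilde S_n>(1+\epsilon)\sqrt{2\sigma^2|\Lambda_n|\log\log|\Lambda_n|}\bigr)\lesssim(\log|\Lambda_n|)^{-(1+\epsilon)}$.

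The two halves of the LIL then follow from Borel--Cantelli. For $\limsup\le1$, along $n_k$ with $|\Lambda_{n_k}|\sim\theta^k$ the tail bounds are summable in $k$ for $\theta$ close to $1$, so the first Borel--Cantelli lemma gives $\limsup_k\le1+\epsilon$; a maximal inequality controls oscillations between consecutive $n_k$, and letting $\theta\downarrow1,\epsilon\downarrow0$ yields $\limsup\le1$. For $\limsup\ge1$, the independence of the $\tilde Y_j$ lets the second Borel--Cantelli lemma force $\limsup\ge1-\epsilon$ over disjoint annuli, and transferring back through the coupling gives $\limsup\ge1$. The hypothesis $\mathrm{Var}(N_n)=\sigma^2|\Lambda_n|(1+o(1))$ is exactly what permits replacing the normalization by $\sigma\sqrt{|\Lambda_n|}$.

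The main obstacle is to make $p$, $q$ and the geometric ratio $\theta$ fit together so that, simultaneously, the coupling error is summable, the corridor remainder is negligible, and the exponential tail is summable. This is precisely where the quantitative rate $\beta>2(\tau_1+\tau_2)+\tfrac12$ and the form $\alpha_{k,l}(n)\le k^{\tau_1}l^{\tau_2}\alpha(n)$ of (b) enter: the factor $k^{\tau_1}l^{\tau_2}$ absorbs the growing number and size of blocks in the two-dimensional index set, and the threshold on $\beta$ guarantees an admissible window for $p,q$. An essentially equivalent route is an almost sure invariance principle---couple $S_n$ with a Gaussian field of matching covariance up to error $o(\sqrt{|\Lambda_n|\log\log|\Lambda_n|})$ almost surely---and inherit the LIL from the Brownian-type LIL of the Gaussian limit; the decoupling and rate bookkeeping are identical, and (a)--(c) are again exactly what make the approximation error small enough.
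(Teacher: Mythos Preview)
The paper does not prove this theorem at all: it is quoted verbatim as Theorem~7.4.2 of Nahapetian~\cite{nahapetian2013limit} and used as a black box to deduce the LIL part of Theorem~\ref{main}. There is therefore no ``paper's own proof'' to compare your proposal against; you have attempted something strictly more ambitious than what the paper does.

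That said, your outline is the standard architecture for proving a LIL for strongly mixing random fields (Bernstein blocks, coupling to independence via Berbee/Berkes--Philipp, exponential tail bounds, geometric subsequences and Borel--Cantelli), and the covariance summability argument via the Davydov inequality is correct. The sketch is plausible but remains only a sketch: the genuinely delicate part---choosing $p(n)$, $q(n)$ and the subsequence ratio so that the coupling error, the corridor remainder, the truncation error, and the Bernstein tail are \emph{simultaneously} summable, and verifying that the threshold $\beta>2(\tau_1+\tau_2)+\tfrac12$ is exactly what makes this window nonempty---is asserted rather than carried out. Also, the maximal inequality you invoke to control fluctuations between consecutive $n_k$ is itself nontrivial for multiparameter random fields under $\alpha$-mixing and would need to be stated and justified. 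If your goal is only to reproduce the paper's argument, you should simply cite Nahapetian as the paper does; if your goal is an independent proof of the cited theorem, the skeleton is right but substantial work remains.
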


\begin{remark}
   Since $X({\bf0})$ is a Bernoulli random variable, then $\| X({\bf0})\|_{2+\delta}=\left(  \int X({\bf0})^{2+\delta}d\mathbb{P}  \right)^{1/(2+\delta)}<\infty$ for all $\delta>0$.  Besides, Ritchie~\cite[Theorem 51]{MR2205908} established the super-exponential decay of correlations in the thermodynamic limit $X$, that is,
    \begin{equation}\label{correlations}
        \lim_{\|i\|_{\max}\to\infty}\mathrm{Cov}( X({\bf0}),X(i)  )\cdot e^{\alpha \| i\|_{\max}}=0,\forall \alpha>0.
    \end{equation}
    This directly implies  statements \eqref{mainresult1} and \eqref{mainresult12} of the above theorems (see the beginning of the proof of Lemma \ref{Var} below).
\end{remark}

\subsection{Technical lemmas}\label{sec:techlemmas}

In order to prove Theorem~\ref{main}, we will verify the convergence criteria given by the previous theorems. This is the object of the following lemmas.

\begin{lemma}\label{Var}
    Let $N_n$ be the number of occupied sites in $\Lambda_n$ for the thermodynamic jamming limit $X$. Then
    \begin{equation*}
        \lim_{n\to\infty}\frac{\mathrm{Var} \,(N_n)}{|\Lambda_n|}=\sum_{i\in\mathbb{Z}^2}\mathrm{Cov}\big(X({\bf0}),X(i)  \big).
    \end{equation*}
\end{lemma}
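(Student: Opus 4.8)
The plan is to compute $\mathrm{Var}(N_n)$ by expanding it as a double sum of covariances, and then show that after normalizing by $|\Lambda_n|$ the off-diagonal terms coming from sites near the boundary of $\Lambda_n$ are negligible. Concretely, writing $N_n = \sum_{i\in\Lambda_n} X(i)$ and using stationarity, we have
\[
\mathrm{Var}(N_n) = \sum_{i\in\Lambda_n}\sum_{j\in\Lambda_n}\mathrm{Cov}\big(X(i),X(j)\big) = \sum_{i\in\Lambda_n}\sum_{k\in\Lambda_n - i}\mathrm{Cov}\big(X({\bf0}),X(k)\big),
\]
where $\Lambda_n - i := \{j - i : j\in\Lambda_n\}$. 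Dividing by $|\Lambda_n|$, this is an average over $i\in\Lambda_n$ of the partial sums $S_i := \sum_{k\in\Lambda_n - i}\mathrm{Cov}(X({\bf0}),X(k))$. First I would note that by the super-exponential decay of correlations \eqref{correlations} (equivalently, since $\sum_i |\mathrm{Cov}(X({\bf0}),X(i))| < \infty$, which follows from \eqref{mainresult1}), the full series $\sum_{k\in\mathbb Z^2}\mathrm{Cov}(X({\bf0}),X(k))$ converges absolutely to the claimed limit $\sigma^2$.

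The key step is then a dominated-convergence / interchange-of-limits argument at the level of the average over $i$. For each fixed $i$, as $n\to\infty$ the set $\Lambda_n - i$ exhausts $\mathbb Z^2$, so $S_i \to \sigma^2$; moreover $|S_i| \le \sum_{k\in\mathbb Z^2}|\mathrm{Cov}(X({\bf0}),X(k))| =: C < \infty$ uniformly in $i$ and $n$. To make the averaging precise I would split $\Lambda_n$ into a ``bulk'' $\Lambda_{n - m_n}$ and a boundary shell, where $m_n\to\infty$ slowly (e.g.\ $m_n = \lfloor\sqrt n\rfloor$). For $i$ in the bulk, $\Lambda_n - i \supseteq \Lambda_{m_n}$, hence $|S_i - \sigma^2| \le \sum_{\|k\|_{\max} > m_n}|\mathrm{Cov}(X({\bf0}),X(k))| =: \varepsilon_n \to 0$. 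For $i$ in the shell, one uses only the crude bound $|S_i| \le C$, together with $|\Lambda_n \setminus \Lambda_{n-m_n}|/|\Lambda_n| = O(m_n / n) \to 0$. Combining,
\[
\left|\frac{\mathrm{Var}(N_n)}{|\Lambda_n|} - \sigma^2\right| \le \varepsilon_n + \frac{|\Lambda_n\setminus\Lambda_{n-m_n}|}{|\Lambda_n|}\,(C + |\sigma^2|) \;\xrightarrow[n\to\infty]{}\; 0,
\]
which is the claim. (Alternatively, one can bypass the shell split entirely: writing $\frac{\mathrm{Var}(N_n)}{|\Lambda_n|} = \sum_{k\in\mathbb Z^2}\mathrm{Cov}(X({\bf0}),X(k))\cdot\frac{|\Lambda_n\cap(\Lambda_n - k)|}{|\Lambda_n|}$, the weight $\frac{|\Lambda_n\cap(\Lambda_n-k)|}{|\Lambda_n|}$ lies in $[0,1]$ and converges to $1$ for every fixed $k$, so dominated convergence for series applies directly with dominating function $|\mathrm{Cov}(X({\bf0}),X(k))|$.)

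I expect the only real subtlety — hardly an obstacle — to be the justification of the interchange of the limit in $n$ with the infinite sum over $k$; this is cleanly handled by the absolute summability $\sum_k|\mathrm{Cov}(X({\bf0}),X(k))| < \infty$ guaranteed by \eqref{correlations}, via the dominated convergence theorem for sums. Everything else is elementary counting of lattice points in boxes (the estimate $|\Lambda_n| \sim (2n+1)^2$ and $|\Lambda_n \cap (\Lambda_n - k)| = |\Lambda_n| - O(\|k\|_{\max}\, n)$). Since the statement and all later applications are phrased for $d = 2$, I would carry out the argument in that case, noting that the general $d$ case is identical with $(2n+1)^2$ replaced by $(2n+1)^d$.
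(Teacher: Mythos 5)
Your proof is correct and follows essentially the same route as the paper: both arguments first establish absolute summability of the covariances from the super-exponential decay \eqref{correlations}, and then split the double sum over $\Lambda_n\times\Lambda_n$ into a bulk contribution (which converges to $\sum_{i}\mathrm{Cov}(X({\bf0}),X(i))$) and boundary-shell terms that are negligible after dividing by $|\Lambda_n|$. Your parenthetical dominated-convergence variant with the weights $|\Lambda_n\cap(\Lambda_n-k)|/|\Lambda_n|$ is a slightly cleaner packaging of the same idea, but there is no substantive difference from the paper's $T_{1,n},T_{2,n},T_{3,n}$ decomposition.
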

\begin{proof}
    The proof of this lemma is analogous to the proof of Proposition 7.2 in \cite{MR2433297}. We include it here for the sake of completeness.
   
    It follows from (\ref{correlations}) that for any $\alpha>0$ there exists $C_\alpha > 0$ such that for any $i,j\in\mathbb{Z}^2$
    \begin{equation*}
        |\mathrm{Cov}(X(i),X(j))|\leq C_\alpha e^{-\alpha ||i-j||_{\max}}.
    \end{equation*}
Thus
    \begin{eqnarray*}
        \sum_{i\in \mathbb{Z}^2} |\mathrm{Cov}(X({\bf0}),X(i))| &\leq & C_{\alpha} \sum_{i\in\mathbb{Z}^2}e^{-\alpha ||i||_{\max}} \\
        &\leq & C_{\alpha} \sum_{i\in \mathbb{Z}^2}\sum_{r=0}^\infty e^{-\alpha ||i||_{\max}} \1_{\{||i||_{\max}=r \}} \\
        &= & C_\alpha \sum_{r=0}^\infty e^{-\alpha r}\sum_{i\in \mathbb{Z}^2} \1_{\{||i||_{\max}=r \}}  \\
        &= & C_\alpha \sum_{r=0}^\infty e^{-\alpha r}|\partial\Lambda_{r}|.\\
    \end{eqnarray*}
   
Since $|\partial\Lambda_{r}|$ increases only polynomially, we conclude that
    \begin{equation}\label{eq:sumcovfinite}
            \sum_{i\in \mathbb{Z}^2} |\mathrm{Cov}(X({\bf0}),X(i))|<\infty.
    \end{equation}
   
Consider a real sequence $u_n\rightarrow\infty$ such that
    \begin{equation*} 
        \lim_{n\to \infty}\frac{u_n |\partial \Lambda_n|}{|\Lambda_n|}=0.
    \end{equation*}

    Let $B_n:=\{ i\in \Lambda_n: d(\{i\},\partial \Lambda_n)<u_n  \}$ for $n\in\mathbb{N}$ and note that  $|B_n|\leq |\partial \Lambda_n| u_n$ thus
    \begin{equation} \label{limit}
        \lim_{n\to \infty}\frac{|B_n|}{|\Lambda_n|}=0.
    \end{equation}
    Set $F_n=\Lambda_n\setminus B_n$ and recall that $ \Lambda_{u_n}(i)$ denotes the box $\{j\in\mathbb{Z}^2: {\|i-j\|_{\max}}\leq u_n \}$. Thanks to the absolute summability \eqref{eq:sumcovfinite}, we can reorder the sum as follows
        \begin{eqnarray*}
        \frac{\mathrm{Var}(N_n)}{|\Lambda_n|}&=& \frac{1}{|\Lambda_n|}\sum_{i\in \Lambda_n}\sum_{j\in \Lambda_n}\mathrm{Cov}(X(i),X(j))\\
        &=& T_{1,n}+T_{2,n}+T_{3,n}
    \end{eqnarray*}
    where
    \begin{eqnarray*}    
        T_{1,n}&=&\frac{1}{|\Lambda_n|}\sum_{i\in F_n}\sum_{j\in \Lambda_n\setminus \Lambda_{u_n}(i)}\mathrm{Cov}(X(i),X(j)), \\
        T_{2,n}&=&\frac{1}{|\Lambda_n|}\sum_{i\in F_n}\sum_{j\in \Lambda_n\cap \Lambda_{u_n}(i) }\mathrm{Cov}(X(i),X(j)), \\
        T_{3,n}&=&\frac{1}{|\Lambda_n|}\sum_{i\in  B_n}\sum_{j\in \Lambda_n }\mathrm{Cov}(X(i),X(j)).
    \end{eqnarray*}
    Now observe that by \eqref{eq:sumcovfinite}, we get
    \begin{equation*}
    |T_{1,n}|\le \frac{| F_n|}{|\Lambda_n|}\sum_{j\in \Lambda_{u_n}^c }|\mathrm{Cov}(X({\bf0}),X(j))|=\frac{| F_n|}{|\Lambda_n|}\sum_{j: \|j\|_{\max}\ge u_n}|\mathrm{Cov}(X({\bf0}),X(j))|\longrightarrow0.
    \end{equation*}
On the other hand,
    \begin{equation*}
        |T_{3,n}|\leq \frac{| B_n|}{|\Lambda_n|}\sup_{i\in\mathbb{Z}^2}\sum_{j\in\mathbb{Z}^2}|\mathrm{Cov}(X(i),X(j))|\longrightarrow0
    \end{equation*}
by \eqref{limit} and \eqref{eq:sumcovfinite}. Finally, by \eqref{limit} we have that
\begin{eqnarray*}
\lim_{n \rightarrow +\infty} T_{2,n} &=& \lim_{n \rightarrow +\infty} \frac{|F_n|}{|\Lambda_n|} \sum_{i\in \Lambda_{u_n}}\mathrm{Cov}(X({\bf0}),X(i)) = \sum_{i\in \mathbb{Z}^2} \mathrm{Cov}(X({\bf0}),X(i))
\end{eqnarray*}
and the proof is complete.
\end{proof}

The next lemma provides the asymptotic behavior of the mixing coefficient $\alpha_{k,l}(n)$.

\begin{lemma}\label{lemma1} Let $k,l \in \mathbb{N}$ be fixed, then the mixing coefficient $\alpha_{k,l}(n)$ satisfies
\begin{equation*}
\alpha_{k,l}(n)\le (k+l)\frac{3^{\lceil n/4\rceil}}{\lfloor n/4\rfloor ! }.
\end{equation*}
\end{lemma}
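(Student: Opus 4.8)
The plan is to control the dependence between events in $\mathcal F_G$ and $\mathcal F_H$ by exploiting the key feature of Ritchie's construction: the value $X(i)$ depends only on $U$ restricted to the armour $\mathcal A(\{i\})$, and these armours are almost surely finite. So if $d(G,H)\ge n$, the events $A\in\mathcal F_G$ and $B\in\mathcal F_H$ become independent once we condition on the armours $\mathcal A(G)=\bigcup_{i\in G}\mathcal A(\{i\})$ and $\mathcal A(H)$ being disjoint, because then $A$ is measurable with respect to $U$ on $\mathcal A(G)$ and $B$ with respect to $U$ on $\mathcal A(H)$, two disjoint (hence independent) blocks of the i.i.d.\ field $U$. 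The mixing coefficient is therefore controlled by the probability that the armours overlap, i.e.\ that some descending nearest-neighbor $U$-path bridges the gap of width at least $n$ between $G$ and $H$.

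First I would make precise the conditioning/decoupling step: on the event $\{\mathcal A(G)\cap\mathcal A(H)=\emptyset\}$ one can write $A$ and $B$ as functions of $U$ on disjoint random regions and argue, via a decomposition over the (finitely many, a.s.) possible realizations of the pair of armours, that $|\mathbb P(A\cap B)-\mathbb P(A)\mathbb P(B)|\le 2\,\mathbb P(\mathcal A(G)\cap\mathcal A(H)\neq\emptyset)$ — the standard ``coupling through a bad event'' bound. Second, I would union-bound: $\mathcal A(G)\cap\mathcal A(H)\neq\emptyset$ forces the existence of a site $z$ reachable by a descending $U$-path from some $i\in G$ and a descending $U$-path from some $j\in H$; concatenating, one gets a nearest-neighbor path from a point of $G$ to a point of $H$ along which $U$ first decreases and then increases (a ``valley''), of total length at least $n$ since $d(G,H)\ge n$. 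Third, I would bound the probability of such a valley path: the number of nearest-neighbor paths of length $m$ starting from a fixed vertex in $\mathbb Z^2$ is at most $4\cdot 3^{m-1}$ (and $4$ choices to turn around), and the probability that the $U$-values along a fixed length-$m$ path form a valley with a prescribed bottom is at most $1/\lfloor m/2\rfloor!$ by exchangeability (only one ordering out of the relevant factorial many is favorable). Summing over the at most $k$ starting points in $G$ (and $l$ in $H$), over path lengths $m\ge n$, and over the position of the bottom, the geometric growth $3^m$ is beaten by the factorial $\lfloor m/2\rfloor!$, and the dominant term is at $m\approx n$; bookkeeping the constants, splitting the valley at its bottom into two monochromatic-slope pieces each of length roughly $n/2$ (whence the $\lceil n/4\rceil$ and $\lfloor n/4\rfloor!$ in the statement), and absorbing the combinatorial factors into the prefactor $(k+l)$, yields the claimed bound $\alpha_{k,l}(n)\le (k+l)\,3^{\lceil n/4\rceil}/\lfloor n/4\rfloor!$.

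The main obstacle I anticipate is the combinatorial accounting in the last step: getting the path count, the valley-ordering probability, and the splitting of a length-$\ge n$ path into pieces of length $\ge n/2$ (with the correct floor/ceiling) to combine into exactly $3^{\lceil n/4\rceil}/\lfloor n/4\rfloor!$ with only the linear prefactor $(k+l)$ — rather than something like $kl$ times a worse constant — requires care. In particular one must argue that it suffices to record a single descending path from $G$ and a single ascending path into $H$ meeting at the common bottom (so the two ``halves'' each have length $\ge \lceil n/4\rceil$ is not automatic and needs the observation that at least one of the two descending legs from the meeting point $z$ spans at least half the gap), and that the sum $\sum_{m\ge \lceil n/4\rceil} 4\cdot 3^{m}/m!$ over both legs telescopes back to a single copy of $3^{\lceil n/4\rceil}/\lfloor n/4\rfloor!$ up to the constant absorbed in $(k+l)$. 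The decoupling identity in the first step is essentially routine once the finiteness of armours (\cite[Lemma 31]{MR2205908}) is invoked, and the super-exponential correlation decay \eqref{correlations} is a useful sanity check that the target bound is of the right order.
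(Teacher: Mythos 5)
Your strategy is viable and rests on the same two pillars as the paper's proof (the armour of a site determines its occupation value, and long monotone $U$-paths have probability at most a reciprocal factorial times an exponential path count), but the way you decouple $A$ from $B$ is genuinely different. The paper does \emph{not} condition on the armours being disjoint: it separates $G$ and $H$ by a coordinate hyperplane (taking $d(G,H)\ge 4n$, each set at distance $\ge 2n$ from the hyperplane), introduces an independent copy $V$ of $U$, builds two fields $Y,Z$ by swapping $U$ and $V$ across the hyperplane so that $X(Y)$ and $X(Z)$ are exactly independent, and bounds the discrepancy by the probability of the single one-sided bad event $E^c=\bigcup_{i\in G\cup H}\{\mathcal A(\{i\})\not\subset\Lambda_n(i)\}$. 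A union bound over the $\le k+l$ sites and the estimate $\mathbb P(\mathcal A(\{0\})\not\subset\Lambda_n)\le 3^n/n!$ then give the result directly; the $1/4$ in the exponent comes purely from this generous coupling geometry (gap $4n$, armour radius $n$), not from splitting a bridging path. Your two-sided ``valley'' computation is sharper in principle (one leg of the valley must span at least half the gap, so you would get roughly $3^{n}/\lceil n/2\rceil!$, which beats the stated bound for large $n$), and your observation that it suffices to union over the $\le\min(k,l)$ starting points rather than over pairs correctly avoids a $kl$ prefactor. So the combinatorial bookkeeping you worry about is not actually needed to reach the stated (weaker) inequality, though you would need a separate check for small $n$ if you insist on matching the exact constant.

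The one step in your write-up that would fail as literally stated is the decoupling identity. The event $\{\mathcal A(G)=S_1\}$ is \emph{not} measurable with respect to $U$ restricted to $S_1$: certifying that the armour stops at $S_1$ requires knowing that $U$ does not continue to decrease across the exterior boundary of $S_1$, so the natural witness region is $S_1$ together with its outer vertex boundary. Consequently, on $\{\mathcal A(G)\cap\mathcal A(H)=\emptyset\}$ alone the indicators of $A$ and $B$ need not factor into functions of $U$ on independent blocks (the two witness regions can share boundary sites). The repair is routine --- replace disjointness by separation at distance $2$, or equivalently absorb the boundary into the bad event, which only shifts $n$ by a constant and does not change the order of the bound --- but it must be made, and it is precisely the technicality that the paper's ghost-field coupling is designed to sidestep.
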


\begin{proof}
The main idea of the proof of this lemma consists in analyzing three versions of the thermodynamic jamming limit given by a suitable coupling. Let $U=\{U(i) \}_{i\in\mathbb{Z}^2}$ and $V=\{V(i) \}_{i\in\mathbb{Z}^2}$ be two families of mutually independent i.i.d. uniform random variables on $[ 0,1 ]$. Let $R_1=\{ (i_1,i_2)\in \mathbb{Z}^2:i_1\leq 0 \}$ and $R_2=\{ (i_1,i_2)\in \mathbb{Z}^2:i_1> 0 \}$. Consider the following random fields
\begin{eqnarray*}
Y(i) &=& U(i)\1_{\{i\in R_1\}}+V(i)\1_{\{i\in R_2\}}, \\
Z(i) &=& V(i)\1_{\{i\in R_1\}}+U(i)\1_{\{i\in R_2\}}.
\end{eqnarray*}
Notice that $Y=\{Y(i)\}_{i\in\mathbb{Z}^2}$ and $Z=\{Z(i)\}_{i\in\mathbb{Z}^2}$ are two families of uniform i.i.d. random variables on $(0,1)$ which are mutually independent.
Recall the notation $X(U),X(Y)$ and $X(Z)$ to denote the versions of the thermodynamic jamming limit obtained using the random fields $U,Y$ and $Z$ respectively. Notice that $X(Y)$ and $X(Z)$ are independent. Let
$G,H$ be two finite subsets of $\mathbb{Z}^2$ with $|G|\leq k$ and $|H|\leq l$  such that
\begin{equation*}
\max\{ i_1: (i_1,i_2)\in G\}\leq -2n \hspace{.2cm} \text{and} \hspace{.2cm} \min\{ i_1: (i_1,i_2)\in H \}\geq 2n.
\end{equation*}
Thus $d(G,H)\geq 4n$. Let $A\in\mathcal{F}_{G}$ and $B\in\mathcal{F}_{H}$. Consider the  event
\begin{equation*}
E=\bigcap_{i\in G\cup H}\{ \mathcal{A}(\{i\})(U)\subset \Lambda_n(i) \}.
\end{equation*}
Note that
\begin{equation*}
\1_{A\cap B}^U=\1_A^Y{\1}_B^Z \hspace{.2cm}\text{on $E$},
\end{equation*}
where $\1_A^{U}$ is a shorthand notation for the indicator function of the event $X(U)\in A$.  Then,
\begin{eqnarray*}
|\mathbb{P}(A \cap B)-\mathbb{P}(A) \mathbb{P}(B) | & = & |\mathbb{E}(\1_{A\cap B}^{U})-\mathbb{E}(\1_A^{Y}\1_B^{Z})| \\
& \leq & \mathbb{E}(|\1_{A\cap B}^{U}-\1_A^{Y}\1_B^{Z}|) \\
& \leq & \mathbb{P}(E^c).
\end{eqnarray*}
 It follows that
\begin{align*}
\mathbb{P}(E^c)&=\mathbb P\left(\bigcup_{i\in G\cup H}\{ \mathcal{A}(\{i\})(U)\not\subset \Lambda_{n}(i) \}\right)\\
&\le\sum_{i\in G\cup H}\mathbb P( \mathcal{A}(\{i\})(U)\not\subset \Lambda_{n}(i) )\\
&\leq (|G|+|H|)\mathbb P( \mathcal{A}(\{0\})(U)\not\subset \Lambda_n ).
\end{align*}
Now, observe that for $n\ge3$
\begin{align}\label{eq:armourn}
\mathbb P(\mathcal A(0)\not\subset \Lambda_n)&=\mathbb P(\exists i\in \Lambda_n^c:U(0\downarrow i))\le \frac{4\cdot 3^n}{(n+1)!}\le \frac{3^n}{n!}.
\end{align}
The $4\cdot 3^n$ is an upper bound on the number of self-avoiding path of size $n+1$ starting from 0 (four possibilities for the first step and at most 3 possibilities for the subsequents steps), and $\frac{1}{(n+1)!}$ is the probability of any such path  of size $n+1$ with decreasing uniforms. We only use the restriction $n\ge3$ to simplify to the last inequality.

We conclude that
\begin{equation*}
\alpha_{k,l}(n)\le (k+l)\left(\frac{3^{\lceil n/4\rceil}}{\lfloor n/4\rfloor ! }\right),
\end{equation*}
which is the desired result.
\end{proof}

Next lemma provides the asymptotic behavior of the mixing coefficient $\alpha_{1,\infty}(n)$.
\begin{lemma}\label{lemma2}
The mixing coefficient $\alpha_{1,\infty}(n)$ satisfies
\begin{equation*}
\alpha_{1,\infty}(n)\le  \frac{(16n+1)3^n}{n!}.
\end{equation*}
\end{lemma}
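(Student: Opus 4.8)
The plan is to bound $\alpha_{1,\infty}(n)$ by the same coupling strategy used for $\alpha_{k,l}(n)$ in Lemma~\ref{lemma1}, but now being careful that one of the two sets of sites is \emph{infinite}, so the simple union bound over $G\cup H$ no longer suffices. First I would fix a single site $i_0$ with, say, $(i_0)_1\le -2n$ (WLOG, by stationarity) and a half-space-type set $H$ of sites with first coordinate $\ge 2n$, so that $d(\{i_0\},H)\ge 4n$. I would use the same decomposition $U,V$ into independent fields and define $Y,Z$ exactly as before, so that $X(Y)$ and $X(Z)$ are independent copies of the thermodynamic jamming limit. The key event is now $E=\{\mathcal A(\{i_0\})(U)\subset \Lambda_n(i_0)\}\cap\bigcap_{j\in H}\{\mathcal A(\{j\})(U)\subset \Lambda_n(j)\}$; on $E$ one again has $\1^U_{A\cap B}=\1^Y_A\1^Z_B$ for $A\in\mathcal F_{\{i_0\}}$, $B\in\mathcal F_H$, because the armours stay inside their respective half-spaces $R_1,R_2$, and therefore $|\mathbb P(A\cap B)-\mathbb P(A)\mathbb P(B)|\le\mathbb P(E^c)$.

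The main obstacle is controlling $\mathbb P(E^c)$ when $H$ is infinite: a naive union bound $\sum_{j\in H}\mathbb P(\mathcal A(\{j\})\not\subset\Lambda_n(j))$ diverges. The resolution I would pursue is the standard observation that the bad event ``$\mathcal A(\{j\})(U)\not\subset\Lambda_n(j)$ for some $j\in H$'' is caused by the existence of \emph{some} decreasing $U$-path of length $n+1$ starting somewhere, and what actually matters is whether such a path \emph{crosses} the slab separating $G$ from $H$. More precisely, on $E^c$ either $\mathcal A(\{i_0\})(U)\not\subset\Lambda_n(i_0)$, or there is a site $j$ with $j_1\ge 2n$ and a decreasing path of length $n+1$ from $j$ leaving $\Lambda_n(j)$; in the latter case, I would instead bound the probability that there exists a site on the hyperplane $\{i_1 = 0\}$ (or more carefully on some intermediate hyperplane at distance $\ge n$ from both $G$ and $H$) that is the endpoint of a decreasing path of length $\ge n$. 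Summing over the sites of one such hyperplane (which for a box-shaped statement contributes a polynomial factor $\le 16n$, matching the $(16n+1)$ in the statement — the $16n$ counting the relevant boundary/hyperplane sites at scale $n$ in $d=2$ and the $+1$ absorbing the origin's own armour) and using the per-path bound $\frac{3^n}{n!}$ from \eqref{eq:armourn} gives $\mathbb P(E^c)\le \frac{(16n+1)3^n}{n!}$.

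Concretely, the steps in order are: (1) reduce via stationarity to $G=\{i_0\}$ with $(i_0)_1\le -2n$ and $H\subset\{i_1\ge 2n\}$; (2) set up the $U$/$V$/$Y$/$Z$ coupling and the event $E$ exactly as in Lemma~\ref{lemma1}, obtaining $|\mathbb P(A\cap B)-\mathbb P(A)\mathbb P(B)|\le\mathbb P(E^c)$; (3) decompose $E^c\subset\{\mathcal A(\{i_0\})\not\subset\Lambda_n(i_0)\}\cup\{\exists j\in H:\mathcal A(\{j\})\not\subset\Lambda_n(j)\}$, bound the first term by $\frac{3^n}{n!}$ via \eqref{eq:armourn}, and for the second term observe that any such armour violation forces a decreasing $U$-path that reaches into the slab $\{|i_1|\le n\}$, hence passes through a site $w$ with $w_1\in\{n, n+1,\dots\}$ within distance $n$ of the separating region; (4) union-bound over the $O(n)$ relevant starting sites on the appropriate hyperplane and apply the single-path estimate $\frac{3^n}{n!}$; (5) collect the constants to get $\alpha_{1,\infty}(n)\le\frac{(16n+1)3^n}{n!}$. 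The bookkeeping in step (3)–(4) — precisely identifying which sites of the infinite half-space can possibly spoil $E$ and counting them correctly so the polynomial prefactor comes out as $16n+1$ — is the delicate part; everything else is a direct transcription of the Lemma~\ref{lemma1} argument.
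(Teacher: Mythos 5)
Your central idea --- replacing the divergent union bound over the infinite set $H$ by a union bound over the sites of a \emph{separating surface}, since any armour emanating from $H$ that reaches the neighborhood of $G$ must do so via a decreasing $U$-path of length $\gtrsim n$ crossing that surface --- is exactly the mechanism of the paper's proof, and you even attribute the constants correctly (the $+1$ to $\mathcal A(\{0\})$, the $16n$ to the separating surface, each weighted by the single-path estimate $3^n/n!$). However, step (1) of your plan contains a genuine error: the reduction ``WLOG $G=\{i_0\}$ with $(i_0)_1\le -2n$ and $H\subset\{i_1\ge 2n\}$'' is not without loss of generality. In the definition of $\alpha_{1,\infty}(n)$ the set $H$ is an \emph{arbitrary} set at distance $\ge n$ from the singleton $G$; after centering $G$ at the origin by stationarity, the extremal case is $H=\Lambda_{2n}^c$, which \emph{surrounds} $G$ and cannot be separated from it by any hyperplane. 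This also breaks your step (4): a hyperplane in $\mathbb Z^2$ has infinitely many sites, so a union bound over ``the relevant starting sites on the appropriate hyperplane'' is either divergent or requires exactly the repair you gesture at in your parenthetical.

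The paper's proof is the corrected version of what you are reaching for: it takes $A\in\mathcal F_{\{0\}}$, $B\in\mathcal F_{\Lambda_{2n}^c}$, defines $Y,Z$ by swapping $U$ and $V$ on the box $\Lambda_{2n}$ (not on half-spaces), and uses the event $E=\{\mathcal A(\{0\})\subset\Lambda_n\}\cap\{\mathcal A(\partial\Lambda_{2n})\cap\Lambda_n=\emptyset\}$. The separating surface is thus the (finite) inner boundary of the box, $|\partial\Lambda_{2n}|=(4n+1)^2-(4n-1)^2=16n$ in $d=2$, and each of its sites can spoil $E$ only via a decreasing path of length about $n$ reaching $\Lambda_n$, which costs at most $3^n/n!$; together with $\mathbb P(\mathcal A(\{0\})\not\subset\Lambda_n)\le 3^n/n!$ this gives $\mathbb P(E^c)\le (16n+1)3^n/n!$. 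So the proposal is salvageable, but as written the geometric setup would fail for the sets $H$ that actually attain (or approach) the supremum, and making the ``intermediate separating surface'' precise --- as the box boundary $\partial\Lambda_{2n}$ rather than a hyperplane --- is not optional bookkeeping but the point of the lemma.
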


\begin{proof}

The proof is very similar to the proof of the preceding lemma. Let $\{U(i)\}_{i\in\mathbb{Z}^2}$ and $\{V(i)\}_{i\in\mathbb{Z}}$ be two independent families of i.i.d. random variables with a uniform distribution on $(0,1)$. Consider the following random fields
\begin{eqnarray}
Y(i) &=& U(i)\1_{\{i\in \Lambda_{2n}\}}+V(i)\1_{\{i\notin \Lambda_{2n}\}},
\nonumber \\
Z(i) &=& V(i)\1_{\{i\in \Lambda_{2n}\}}+U(i)\1_{\{i\notin \Lambda_{2n}\}}. \nonumber
\end{eqnarray}

Note that $\{Y(i)\}_{i\in\mathbb{Z}^2}$ and $\{Z(i)\}_{i\in\mathbb{Z}^2}$ are two families of independent uniform random variables which are also independent between them.

$X(U),X(Y)$ and $X(Z)$ to denote the versions of the thermodynamic jamming limit obtained using the random fields $U,Y$ and $Z$ respectively. Notice that $X(Y)$ and $X(Z)$ are independent.

Let $A\in \mathcal{F}_{\{0\}}$ and $B\in\mathcal{F}_{\Lambda^c_{2n}}$ and observe that $d(\{0\},\Lambda^c_{2n})\geq 2n$. Next let us define the event
\begin{equation*}
E=\{  \mathcal{A}(\{0\})\subset \Lambda_{n}  \}\cap \{  \mathcal{A}(\partial \Lambda_{2n})\cap \Lambda_n=\emptyset  \}
\end{equation*}
and observe that
\begin{equation*}
\1_{A\cap B}^{U}=\1_A^Y \1_B^Z \hspace{.2cm}\text{on $E$}.
\end{equation*}
Thus
\begin{eqnarray*}
|\mathbb{P}(A \cap B)-\mathbb{P}(A) \mathbb{P}(B) |=|\mathbb{E}(\1^U_{A\cap B})-\mathbb{E}(\1_A^{Y}\1_B^{Z})|\leq  \mathbb{E}(|\1^U_{A\cap B}-\1_A^{Y}\1_B^{Z}|)\leq \mathbb{P}(E^c).
\end{eqnarray*}
Since $\mathbb{P}(E^c)\leq \frac{3^n}{n!}+|\partial \Lambda_{2n}|\frac{3^n}{n!}=\frac{(16n+1)3^n}{n!}$ we conclude that
\begin{equation*}
\alpha_{1,\infty}(n)\le \frac{(16n+1)3^n}{n!}.
\end{equation*}
\end{proof}

\subsection{Asymptotic variance}
Next we prove that the asymptotic variance for the parking process is non-trivial. The proof presented follows the ideas described in Penrose~\cite{MR1887532}.

\begin{lemma}\label{lemma3}
$\lim_{n\to\infty}\frac{\mathrm{Var} \,(N_n)}{|\Lambda_n|}=\sigma^2>0$.
\end{lemma}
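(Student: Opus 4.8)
Here is the plan. The limit in the statement exists and equals $\sigma^2=\sum_{i\in\mathbb Z^2}\mathrm{Cov}(X(\mathbf{0}),X(i))$ by Lemma~\ref{Var}, and it is finite by \eqref{eq:sumcovfinite}; what has to be shown is that $\sigma^2>0$. Since $\mathrm{Var}(N_n)/|\Lambda_n|\to\sigma^2$, it suffices to produce a constant $c>0$ with $\mathrm{Var}(N_n)\ge c|\Lambda_n|$ for all large $n$. It is essential to bound $\mathrm{Var}(N_n)$ directly rather than the sum of covariances $\sum_i\mathrm{Cov}(X(\mathbf 0),X(i))$, because the latter involves cancellations (neighbouring sites are negatively correlated); by contrast, the natural martingale expansion of $\mathrm{Var}(N_n)$ relative to the i.i.d.\ field $U$ is a sum of \emph{nonnegative} terms, and it is enough to keep a positive proportion of them.

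Fix a large integer $K$ and let $w_1,\dots,w_{M_n}$ be the points of a sufficiently dilated sublattice of $\mathbb Z^2$ contained in $\Lambda_n$, so that $M_n\asymp|\Lambda_n|$ and the boxes $W_a:=\Lambda_K(w_a)$, together with fixed bounded neighbourhoods of them, are pairwise disjoint and far apart. Let $\mathcal E:=\sigma(U(j):j\notin\bigcup_aW_a)$. By the law of total variance $\mathrm{Var}(N_n)\ge\mathbb E[\mathrm{Var}(N_n\mid\mathcal E)]$, and since the blocks $U|_{W_1},\dots,U|_{W_{M_n}}$ are conditionally independent given $\mathcal E$, revealing them one at a time produces a martingale with orthogonal increments, whence
\[
\mathrm{Var}(N_n\mid\mathcal E)=\sum_{a=1}^{M_n}\mathbb E\Big[\,\mathrm{Var}\big(\mathbb E[N_n\mid\mathcal E, U|_{W_1},\dots,U|_{W_a}]\,\big|\,\mathcal E, U|_{W_1},\dots,U|_{W_{a-1}}\big)\,\Big|\,\mathcal E\Big].
\]
Each summand is nonnegative, and the proof reduces to bounding each of them below by a constant $\varepsilon_0>0$ depending only on $K$: then $\mathrm{Var}(N_n)\ge\varepsilon_0 M_n\ge c|\Lambda_n|$, and combined with Lemma~\ref{Var} this gives $\sigma^2\ge\varepsilon_0\liminf_n M_n/|\Lambda_n|>0$.

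Fix $a$. One looks for an event $\Gamma_a\in\mathcal E$, measurable with respect to the marks in a bounded neighbourhood of $W_a$ alone, with $\mathbb P(\Gamma_a)\ge p>0$ for a universal $p$, together with a set $\mathcal O_a$ of configurations of $U|_{W_a}$ of universal positive probability, such that on $\Gamma_a$ and for $U|_{W_a}\in\mathcal O_a$: \emph{(i) insulation} — the occupancy of $\mathbb Z^2\setminus W_a$ does not depend on $U|_{W_a}$, and that of $W_a$ is, given $\mathcal E$, a deterministic function of $U|_{W_a}$; and \emph{(ii) local non-degeneracy} — the function $U|_{W_a}\mapsto\sum_{i\in W_a}X(i)$ is not constant over $\mathcal O_a$, with conditional variance at least a universal $c_0>0$. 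Granting (i)--(ii), on $\Gamma_a$ the inner conditional expectation in the display above equals, as a function of $U|_{W_a}$ restricted to $\mathcal O_a$, the non-degenerate quantity $\sum_{i\in W_a}X(i)$ plus a term independent of $U|_{W_a}$; hence its conditional variance is at least $\mathbb P(\mathcal O_a)\,c_0$ on $\Gamma_a$, and the $a$-th summand is at least $\mathbb P(\Gamma_a)\,\mathbb P(\mathcal O_a)\,c_0=:\varepsilon_0>0$.

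Property (ii) is a finite, explicit check: once the marks on the sites of $\mathbb Z^2\setminus W_a$ adjacent to $W_a$ are all smaller than the marks inside $W_a$ (which can be arranged through $\Gamma_a$ and $\mathcal O_a$), the occupancy of $W_a$ is a deterministic function of the uniformly distributed relative order of the marks inside $W_a$, and this function takes more than one value — e.g.\ orderings in which the centre of $W_a$ is processed first yield a number of occupied sites in $W_a$ different from that produced by certain other orderings. The main obstacle is property (i): producing a positive-probability, purely local event on which a perturbation of the marks inside $W_a$ cannot propagate out of $W_a$, and, conversely, the occupancy inside $W_a$ is not influenced from outside. This is a quantitative \emph{stabilization} statement for the parking process, and it is exactly where the model's structure enters; one establishes it using Ritchie's armour estimates \cite{MR2205908} — such as \eqref{eq:armourn} — and the super-exponential decay of correlations \eqref{correlations}, for instance by prescribing in $\Gamma_a$ an insulating configuration of the marks in one or two annuli around $W_a$ that blocks the monotone mark-paths along which influence can spread inward and outward, so that the armour of each site near $W_a$ stays within a fixed box. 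This step follows the strategy of Penrose~\cite{MR1887532}. (An alternative route, since $\mathrm{Var}(N_n-\bar N_n)=o(|\Lambda_n|)$ by \eqref{eq:armourn}, is to transfer positivity of $\sigma^2$ from the free-boundary process $X^{(0)}_{\Lambda_n}$, when the corresponding statement is available from \cite{MR1887532}.)
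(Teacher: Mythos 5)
Your proposal is correct and follows essentially the same route as the paper: a sparse array of disjoint blocks, the law of total variance with conditioning on the marks outside the blocks' interiors, a positive-probability local event that insulates each block, and a non-degenerate conditional occupancy count inside. The paper simply instantiates the insulation/non-degeneracy step that you leave as a sketch: it tiles $\Lambda_{7n+3}$ by $7\times7$ boxes and declares a box good when the marks on an eight-site non-adjacent ring $A$ are all smaller than the remaining marks in the box, so that $A$ is occupied outright and shields the central plus-shape $B$, whose occupancy count is then $1$ or $4$ with probabilities $1/5$ and $4/5$, giving the explicit lower bound $\mathrm{Var}(N_{7n+3})\ge\alpha\beta(2n+1)^2$.
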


\begin{proof}
Let us consider the following partition of boxes centered at the origin. For $n\geq1$, the box $\Lambda_{7n+3}$ can be partitioned into $(2n+1)^2$ boxes of dimensions $7\times7$. Specifically, $\Lambda_{7n+3}$ is the union of the disjoint boxes
\begin{equation*}
\Lambda_{\kappa}:= \Lambda_3(7\kappa), \, \text{ with } \, \kappa\in\Lambda_n.
\end{equation*}

Denote by $A$ and $B$ the following subsets of the box $\Lambda_3$
\begin{eqnarray*}
A &=& \{(1,2), (1,-2),(2,1), (2,-1), (-1,2),(-1, -2), (-2,1), (-2,-1)\},
\\
B &=& \{(-1,0),(0,1),(0,0),(0,-1),(1,0) \}.
\end{eqnarray*}

\begin{figure}[h!]
\centering
\includegraphics[width=6cm]{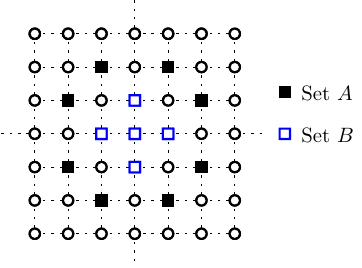}
\label{Lambda3}
\caption{Representation of $\Lambda_3$ and its subsets $A$ and $B$.}
\end{figure}

Also, for each $\Lambda_{\kappa}$, we define the random variables
\begin{equation*}
U_{\kappa}:=\max\{U(i): i\in A + 7\kappa\}
\end{equation*}
and
\begin{equation*}
V_{\kappa}:=\min\{U(i): i\in (\Lambda_3 \setminus A)+ 7\kappa \}.
\end{equation*}
We say that the box $\Lambda_{\kappa}$ is a \textit{good box} if the event $\{U_{\kappa}<V_{\kappa}\}$ occurs.
 Let $I_{\kappa}:={\1}_{\{\Lambda_{\kappa}\text{ is good}\}}$ for ${\kappa}\in\Lambda_n$  which are independent Bernoulli random variables with a probability of success $\beta:=\mathbb{P}(U_{\kappa}<V_{\kappa})>0$. Consider now the $\sigma$-algebra $\mathcal{F}_{7n+3}$ generated by the value of $\sum_{\kappa\in\Lambda_n}I_{\kappa}$  and the uniform random variables of all the sites in $\Lambda_{7n+3}$ except those from $\{B+7\kappa: \kappa\in\Lambda_n \text{ and } I_{\kappa}=1\}.$  

Observe that
\begin{equation*}
\mathrm{Var}(N_{7n+3})=\mathrm{Var}[\mathbb{E}\big(N_{7n+3}\big|\mathcal{F}_{7n+3}\big)]+\mathbb{E}[\mathrm{Var}\big(N_{7n+3}\big|\mathcal{F}_{7n+3}\big)]\ge\mathbb{E}[\mathrm{Var}\big(N_{7n+3}\big|\mathcal{F}_{7n+3}\big)].
\end{equation*}
Writing
\[
N_{7n+3}=\left(N_{7n+3}-\sum_{\kappa\in\Lambda_n}I_{\kappa}\sum_{i\in B+7\kappa}X(i)\right)+\sum_{\kappa\in\Lambda_n}I_{\kappa}\sum_{i\in B+7\kappa}X(i),
\]
we notice that the random variable between parentheses is $\mathcal{F}_{7n+3}$-mensurable and therefore
\begin{align*}
\mathrm{Var}\big(N_{7n+3}\big|\mathcal{F}_{7n+3}\big)&=\mathrm{Var}\left( \sum_{\kappa\in\Lambda_n}I_{\kappa}\sum_{i\in B+7\kappa}X(i)\, \big| \,\mathcal{F}_{7n+3}\right).
\end{align*}
Now, we use the fact that, conditionally on $\mathcal{F}_{7n+3}$, the random variables $\sum_{i\in B+7\kappa}X(i)$ for those $\kappa$'s such that $I_{\kappa}=1$, are independent  and identically distributed. Thus,
\begin{align*}
\mathrm{Var}\big(N_{7n+3}\big|\mathcal{F}_{7n+3}\big)&=\sum_{\kappa\in\Lambda_n}I_{\kappa}\mathrm{Var}\left( \sum_{i\in B+7\kappa}X(i)\big|\mathcal{F}_{7n+3}\right).
\end{align*}
If $I_{\kappa}=1$, then $\sum_{i\in B+7\kappa}X(i)=1$ with probability $1/5$ and it is $4$ with probability $4/5$. Then, for those $\kappa$'s such that $I_{\kappa}=1$,
$\mathrm{Var}\left( \sum_{i\in B+7\kappa}X(i)\big|\mathcal{F}_{7n+3}\right)=\alpha$ for some constant $\alpha>0$.  

Putting everything together, we obtain that
\[
\mathrm{Var}(N_{7n+3})\ge \alpha \mathbb{E}\left(\sum_{\kappa\in\Lambda_n}I_{\kappa}\right)=(2n+1)^2\alpha\beta.
\]

Thus,  
\begin{equation*}
\frac{\mathrm{Var}(N_{7n+3})}{|\Lambda_{7n+3}|}\geq\frac{\alpha\beta(2n+1)^2}{(2(7n+3)+1)^2}.
\end{equation*}
This, in turn, implies that
\begin{equation*}
\liminf_{n\to\infty}\frac{\mathrm{Var}(N_{7n+3})}{|\Lambda_{7n+3}|}>0
\end{equation*}
and the proof is complete.
\end{proof}

We are now ready to prove Theorem \ref{main}.

\subsection{Proof of Theorem \ref{main}}\label{proof_mainthm}

\begin{proof}
For the central limit theorem statement we check that we are in force of all the conditions of Theorem \ref{ThmMain}:
\begin{itemize}
\item Item (a) is satisfied, by Lemma \ref{lemma1}.
\item Item (b) is satisfied also by Lemma \ref{lemma1}, taking $\delta=1$ for instance (recalling that $X(i)\in\{0,1\}$ in our case).
\item Item (c) is satisfied by  Lemma \ref{lemma2}.
\end{itemize}
From Lemmas \ref{Var} and \ref{lemma3} we have that
\begin{equation}\label{eqm}
\lim_{n\to\infty}\frac{\mathrm{Var} \,(N_n)}{|\Lambda_n|}=\sum_{i\in\mathbb{Z}^2}\mathrm{Cov}(X(0),X(i))=\sigma^2>0.
\end{equation}
So Theorem \ref{ThmMain} applied and the CLT part of the theorem is proved.

For the law of iterated logarithm, we check that we are in force of all the conditions of Theorem \ref{nahap}:
 \begin{itemize}
\item Item (a) is satisfied with $\delta=1$ for instance, since $X(i)\in\{0,1\}$.
\item Item (b) is satisfied  by Lemma \ref{lemma1}, taking $\tau_1=\tau_2=1$.
\item Item (c) is satisfied by  Lemma \ref{lemma1} (take $\delta=1$, $\delta'=1/2$, and $\beta=5$).
\end{itemize}
Finally, \eqref{eqm} above also proves the variance condition here. So Theorem \ref{nahap} applied and the LIL part of the theorem is proved, concluding the proof of Theorem \ref{main}.
\end{proof}

\subsection{Proof of Proposition \ref{prop:concentration}}\label{sec:proof_conc}

Let us first define the mixing coefficients.
\begin{equation*}
\phi_{\infty,1}(k):=\sup\{|\mathbb P(X({\bf0})=1|A)-\mathbb P(X({\bf0})=1)|:A\in\mathcal F_{\Lambda_k^c}, \mathbb P(A)>0\}.
\end{equation*}
We will use \cite[Corollary 4(b)(i)]{dedecker2001exponential}, which states that, in our framework, if $B=1+\sum_{k\in\mathbb Z^d\setminus\{0\}}\phi_{\infty,1}(||k||)<\infty$, then
\[
\mathbb P\left(\left|\frac{N_n}{|\Lambda_n|}-\rho\right|>\epsilon\right)\le e^{\frac{1}{e}-\frac{|\Lambda_n|\epsilon^2}{4Be}},\epsilon>0.
\]

\begin{proof}
According to what we mentioned above,  we only have to compute $B$ (and in particular show that it is finite.  First observe that for any $\Lambda\subset\mathbb Z^2$ and any $x$ we have
\[
\mathbb P(X({\bf0})=1|X_{\Lambda^c}=x_{\Lambda^c})=\mathbb P(X^{(x)}_\Lambda({\bf0})=1).
\]
On the other hand, $X^{(x)}_\Lambda$ can be constructed on the same probability space as $X$, that is, using the same field $U$. Thus
\begin{align*}
|\mathbb P(X({\bf0})=1|X_{\Lambda_k^c}=x_{\Lambda_k^c})-\mathbb P(X({\bf0})=1)|&\leq \mathbb P(X({\bf0})(U)\ne X^{(x)}_{\Lambda_k}({\bf0})(U))\\
&\le \mathbb P(\mathcal A({\bf 0})(U)\cap \partial\Lambda_k=\emptyset)\\&\le \frac{2d(2d-1)^{k-1}}{k!}.
\end{align*}
(The last inequality follows using the same argument that yields to \eqref{eq:armourn}, but for $d\ge1$).

We therefore get for any $A\in\mathcal F_{\Lambda_k^c}$ with $\mathbb P(A)>0$
\begin{align*}
|\mathbb P(X({\bf0})=1|A)-\mathbb P(X({\bf0})=1)|&=\frac{1}{\mathbb P(A)}|\mathbb P(\{X({\bf0})=1\}\cap A)-\mathbb P(X({\bf0})=1)\mathbb P(A)|\\
&\le\frac{1}{\mathbb P(A)}\int_{A}|\mathbb P(X({\bf0})=1| X_{\Lambda_k^c}=x_{\Lambda_k^c})-\mathbb P(X({\bf0})=1)|d\mathbb P(x)\\
&\le \frac{2d(2d-1)^k}{(2d-1)k!},
\end{align*}
which is  an upper bound for $\phi_{\infty,1}(k)$.

We conclude with
\begin{align*}
B&=1+\sum_{k\in\mathbb Z^d\setminus\{0\}}\phi_{\infty,1}(\|k\|)\\
&\le 1+\sum_{k\ge1}\sum_{v\in\partial\Lambda_k}\frac{2d(2d-1)^k}{(2d-1)k!}\\
&= 1+\frac{2d}{2d-1}\sum_{k\ge1}\frac{(2d-1)^{k}[(2k+1)^d-(2k-1)^d]}{k!}.
\end{align*}
\end{proof}

\subsection{Proof of Proposition \ref{prop:mean-dev}}\label{sec:proof_expect}
\begin{proof}
We have
\begin{align*}
\left|\,\mathbb E \bar N_n-|\Lambda_n|\rho\,\right|&=\left|\,\mathbb E (\bar N_n-N_n)\,\right|\le\mathbb E\left|\,\bar N_n-N_n\,\right|\\
&\le\mathbb E\sum_{i\in\Lambda_n}\1_{\{X(i)(U)\ne X_{\Lambda_n}(i)(U)\}}\\
&\le \sum_{i\in\Lambda_n}\mathbb P(X(i)(U)\ne X_{\Lambda_n}(i)(U))\\
&\le \sum_{r=0}^n\sum_{i\in\partial\Lambda_r}\mathbb P(X(i)(U)\ne X_{\Lambda_n}(i)(U))\\
&\le \sum_{r=0}^n\sum_{i\in\partial\Lambda_r}\mathbb P(\mathcal A(\{0\})\not\subset \Lambda_{n-r})\\
&\le \sum_{r=0}^n\sum_{i\in\partial\Lambda_r}\frac{2d(2d-1)^{n-r}}{(n-r+1)!}\\
&= \sum_{r=0}^n|\partial\Lambda_{n-r}|\frac{2d(2d-1)^{r}}{(r+1)!}\\
&= \frac{2d(2d-1)^{n}}{(n+1)!}+\sum_{r=0}^{n-1}\frac{2d(2d-1)^{r}[(2(n-r)+1)^d-(2(n-r)-1)^d]}{(r+1)!}\\
&\le \frac{2d(2d-1)^{n}}{(n+1)!}+(2d)^2\sum_{r=0}^{n-1}\frac{(2d-1)^{r}(2(n-r)+1)^{d-1}}{(r+1)!}.
\end{align*}
The last inequality follows from an application of the mean value theorem to $f(x)=x^d$. Notice that this inequality yields $4(e-1)$ for $d=1$, twice the stated value of the proposition. The calculation for $d=1$ can be done slightly more easily: 
\begin{align*}
\left|\,\mathbb E \bar N_n-|\Lambda_n|\rho\,\right|&\le \sum_{i\in\Lambda_n}\mathbb P(X(i)(U)\ne X_{\Lambda_n}(i)(U))\le 2\sum_{r=0}^{n}\frac{1}{(r+1)!}\le 2(e-1).
\end{align*}
\end{proof}

\subsection{Proof of Theorem \ref{density1}}\label{sec:proof_prop}
\begin{proof}
Let $\mathcal{A}(\{0\})$ be the armour of $\{0\}$. Since $\mathcal{A}(\{0\})$ is almost surely finite (see \cite[Lemma 31]{MR2205908}), then the armour $\mathcal{A}(\{0\})$ may be anyone of the following (random) sets:
\begin{itemize}
    \item[$(a)$] $\mathcal{A}(\{0\})=\{0\}$;
    \item[$(b)$] $\mathcal{A}(\{0\})=\{0, \ldots, n\}$, for some $n\in\mathbb{N}$;
    \item[$(c)$] $\mathcal{A}(\{0\})=\{-m, \ldots, 0\}$, for some $m\in\mathbb{N}$;
    \item[$(d)$] $\mathcal{A}(\{0\})=\{-m, \ldots, n\}$, for some $m, n\in\mathbb{N}$.\\
\end{itemize}

Let $U=\{U_i\}_{i\in\mathbb{Z}}$ be the i.i.d. random variables with a uniform distribution on $(0,1)$ used to build the armours.

If $\mathcal{A}(\{0\})=\{0\}$, then

\begin{equation}
        \mathbb{P}(\mathcal{A}(\{0\})=\{0\})=\mathbb{P}(U_0<\min\{U_{-1},U_1\})=\frac{1}{3}.\label{armour1}
    \end{equation}

If, for some $n \in \mathbb{N}$, the random set $\mathcal{A}(\{0\})$ equals $\{0, \ldots , n \}$, then

\begin{eqnarray*}
        \mathbb{P}(\mathcal{A}(\{0\})=\{0, \ldots, n\})&=&\mathbb{P}(U_{-1}>\cdots>U_n, \, U_n<U_{n+1})\\
        &=&\mathbb{P}(\{U_{-1}>\cdots>U_n\} \setminus \{U_{-1}>\cdots>U_{n+1}\})\\
        &=&\mathbb{P}(U_{-1}>\cdots>U_n)-\mathbb{P}(U_{-1}>\cdots>U_{n+1})\\
        &=&\frac{1}{(n+2)!}-\frac{1}{(n+3)!}\\
        &=&\frac{n+2}{(n+3)!}.
    \end{eqnarray*}
    Thus,  
    \[
    \sum_{n\geq1}\mathbb{P}(\mathcal{A}(\{0\})=\{0, \ldots, n\})=\sum_{n\geq1}\left[\frac{1}{(n+2)!}-\frac{1}{(n+3)!}\right]=\frac{1}{6} \ .
    \]

Note that, analogously to the previous case,
\[
\mathbb{P}(\mathcal{A}(\{0\})=\{-m, \ldots, 0\})=\frac{m+2}{(m+3)!}
\]
and
    \[
    \sum_{m\geq1}\mathbb{P}(\mathcal{A}(\{0\})=\{-m, \ldots, 0\})=\frac{1}{6}.
    \]

Finally, if for some $m,n \in \mathbb{N} \ \mathcal{A}(\{0\})=\{-m, \ldots, n\}$, then

\begin{eqnarray}
        &&\mathbb{P}(\mathcal{A}(\{0\})=\{-m, \ldots, n\})\nonumber\\
        &=&\mathbb{P}(U_{-m-1}>U_{-m}, \, U_{-m}<\cdots<U_0, \, U_0>\cdots >U_n, \, U_n<U_{n+1})\nonumber\\
        &=&\int_0^1 \mathbb{P}(U_{-m-1}>U_{-m}, \, U_{-m}<\cdots<U_{-1}<u, \, u>U_1>\cdots >U_n, \, U_n<U_{n+1}) \ du  \nonumber\\
        &=&\int_0^1 \mathbb{P}(U_{-m-1}>U_{-m}, \, U_{-m}<\cdots<U_{-1}<u) \mathbb{P}(u>U_1>\cdots >U_n, \, U_n<U_{n+1}) \ du  \nonumber \\
        &=&\int_0^1 \left(\mathbb{P}(U_{-m}<\cdots<U_{-1}<u)-\mathbb{P}(U_{-m-1}<\cdots<U_{-1}<u)\right)\cdot \nonumber\\
        &&\quad\quad\quad\quad\quad\quad\quad\left(\mathbb{P}(U_{n}<\cdots<U_{1}<u)-\mathbb{P}(U_{n+1}<\cdots<U_{1}<u)\right) du \nonumber \\
    &=&\displaystyle\int_0^1\left(\frac{u^m}{m!}-\frac{u^{m+1}}{(m+1)!}\right)\left(\frac{u^n}{n!}-\frac{u^{n+1}}{(n+1)!}\right) du \label{repintegral} \\   \nonumber\\
        &=&(b_{m,n}-b_{m,n+1})-(b_{m+1,n}-b_{m+1,n+1}),\nonumber
    \end{eqnarray}
where $b_{m,n}:=\frac{1}{(m+n+1)m!n!}$.  Thus,
    \begin{eqnarray*}
        \sum_{n\geq1}\sum_{m\geq1}\mathbb{P}(\mathcal{A}(\{0\})=\{-m, \ldots, n\})&=&\sum_{n\geq1}\sum_{m\geq1}[(b_{m,n}-b_{m,n+1})-(b_{m+1,n}-b_{m+1,n+1})]\\
        &=&\sum_{n\geq1}[b_{1,n}-b_{1,n+1}]\\
        &=&b_{1,1}=\frac{1}{3}.
    \end{eqnarray*}
According to the construction of the thermodynamic limit $X$, we have the following scenarios:

\begin{enumerate} [(i)]
\item If $\mathcal{A}(\{0\})=\{0\},$ then $X(0)=1.$\\
\item If $\mathcal{A}(\{0\})=\{0, \ldots, n\}$ for some $n\in\mathbb{N}$, then
\[
X(0)=
\begin{cases}
    1, & \text{if $n$ is even} \\
    0, & \text{if $n$ is odd.}
\end{cases}
\]
\item If $\mathcal{A}(\{0\})=\{-m, \ldots, 0\}$ for some $m\in\mathbb{N}$, then
\[
X(0)=
\begin{cases}
    1, & \text{if $m$ is even} \\
    0, & \text{if $m$ is odd.}
\end{cases}
\]
\item If $\mathcal{A}(\{0\})=\{-m, \ldots, n\}$ for some $m,n\in\mathbb{N}$, then
\[
X(0)=
\begin{cases}
    1, & \text{if $m$ and $n$ are even} \\
    0, & \text{if $m$ or $n$ are odd.}
\end{cases}
\]
\end{enumerate}

It follows from the discussion above that

\begin{eqnarray}
\mathbb{P}(X(0)=1)
&=&\mathbb{P}(\mathcal{A}(\{0\})=\{0\})
+\sum_{n\geq1}\mathbb{P}\left(\mathcal{A}(\{0\})=\{0, \ldots,2n\}\right)
+\sum_{m\geq1}\mathbb{P}\left(\mathcal{A}(\{0\})=\{-2m, \ldots,0\}\right) \nonumber\\
&&+\sum_{n\geq1}\sum_{m\geq1}\mathbb{P}\left(\mathcal{A}(\{0\})=\{-2m, \ldots,2n\}\right). \label{P0}
\end{eqnarray}
Note that
\begin{eqnarray}
\sum_{m\geq1}\mathbb{P}\left(\mathcal{A}(\{0\})=\{-2m, \ldots,0\}\right)&=&
\sum_{n\geq1}\mathbb{P}\left(\mathcal{A}(\{0\})=\{0, \ldots,2n\}\right)\nonumber\\
&=&\sum_{n\geq1}\frac{2n+2}{(2n+3)!}\nonumber\\
&=&\sum_{n\geq1}\left[\frac{1}{(2n+2)!}-\frac{1}{(2n+3)!}\right]\nonumber\\
&=&\sum_{n\geq4}\frac{(-1)^n}{n!}\nonumber\\
&=&\frac{1}{e}-\frac{1}{3}.\label{armour2}
\end{eqnarray}

On the other hand, it follows from (\ref{repintegral}) that

\

$\displaystyle\sum_{n\geq1}\sum_{m\geq1}\mathbb{P}\left(\mathcal{A}(\{0\})=\{-2m, \ldots,2n\}\right)$\\
\begin{eqnarray}
&=&\displaystyle\sum_{n\geq1}\sum_{m\geq1}\int_0^1\left(\frac{u^{2m}}{(2m)!}-\frac{u^{2m+1}}{(2m+1)!}\right)\left(\frac{u^{2n}}{(2n)!}-\frac{u^{2n+1}}{(2n+1)!}\right) du \nonumber\\ \nonumber\\
&=&\displaystyle\sum_{n\geq1}\sum_{m\geq1}\int_0^1\left(\frac{u^{2m+2n}}{(2m)!(2n)!}-\frac{u^{2m+2n+1}}{(2m)!(2n+1)!}-\frac{u^{2m+2n+1}}{(2m+1)!(2n)!}+\frac{u^{2m+2n+2}}{(2m+1)!(2n+1)!}\right) du\nonumber\\ \nonumber\\
&=&\displaystyle\int_0^1\left[ \left(\sum_{m\geq1}\frac{u^{2m}}{(2m)!}\sum_{n\geq1}\frac{u^{2n}}{(2n)!}\right)
-\left(\sum_{m\geq1}\frac{u^{2m}}{(2m)!}\sum_{n\geq1}\frac{u^{2n+1}}{(2n+1)!}\right)\right.\nonumber\\ \nonumber\\
&&\left.-\displaystyle\left(\sum_{m\geq1}\frac{u^{2m+1}}{(2m+1)!}\sum_{n\geq1}\frac{u^{2n}}{(2n)!}\right)
+\left(\sum_{m\geq1}\frac{u^{2m+1}}{(2m+1)!}\sum_{n\geq1}\frac{u^{2n+1}}{(2n+1)!}\right)\right]du  \nonumber\\ \nonumber\\
&=&\displaystyle\int_0^1 \left[(\cosh(u)-1)^2 -2(\cosh(u)-1)(\sinh(u)-u)+(\sinh(u)-u)^2\right]du \nonumber\\ \nonumber\\
&=&\displaystyle\frac{5}{6}-\frac{2}{e}-\frac{1}{2e^2}. \label{armour3}\\ \nonumber
\end{eqnarray}

Collecting (\ref{armour1}), (\ref{armour2}) and (\ref{armour3}) in (\ref{P0}), we obtain
\begin{eqnarray}
\mathbb{E}[X(0)] &=&\mathbb{P}(X(0)=1) \nonumber\\
&=& \frac{1}{3} + 2 \left(\frac{1}{e} - \frac{1}{3} \right) + \left( \frac{5}{6} - \frac{2}{e} - \frac{1}{2e^2}\right) \nonumber \\
&=& \frac{1-e^{-2}}{2} \nonumber
\end{eqnarray}
which is the desired result.  
\end{proof}

\begin{proof}[Proof of Proposition \ref{prop:d=1}] Let  $\sigma^2>0$ be as given in Lemma~\ref{lemma3}, and write 
\[\frac{\bar N_n-\mathbb E(\bar N_n)}{\sqrt{2|\Lambda_n|\sigma^2\log\log|\Lambda_n|}}=
\frac{ N_n-|\Lambda_n|\rho}{\sqrt{2|\Lambda_n|\sigma^2\log\log|\Lambda_n|}}
+\frac{|\Lambda_n|\rho -\mathbb E(\bar N_n)}{\sqrt{2|\Lambda_n|\sigma^2\log\log|\Lambda_n|}}
+\frac{\bar N_n- N_n}{\sqrt{2|\Lambda_n|\sigma^2\log\log|\Lambda_n|}}.
\]
From Theorem~\ref{main}, 
\[\limsup_n \frac{ N_n-|\Lambda_n|\rho}{\sqrt{2|\Lambda_n|\sigma^2\log\log|\Lambda_n|}}=1, \hspace{0.5cm} \text{almost surely,}\]
and by Proposition \ref{prop:mean-dev}, 
\[\lim_{n\to\infty} \frac{|\Lambda_n|\rho -\mathbb E(\bar N_n)}{\sqrt{2|\Lambda_n|\sigma^2\log\log|\Lambda_n|}}=0.\]
Therefore, for the LIL it is enough to prove that, with probability 1, there exists $K$ such that for any $n\ge K$ we have $|N_n-\bar N_n|\le \sqrt{|\Lambda_n|}$ (in our case $|\Lambda_n|=2n+1$). Consider the construction of $X$ and $X_{\Lambda_n}$ using the same field  $U$. We have
\[
\mathbb P(|N_n-\bar N_n|>M)\le \mathbb P\left(\sum_{i\in\Lambda_n}\1_{\{X(i)(U)\ne X_{\Lambda_n}(i)(U)\}}> M\right)
\le \mathbb P\left(T_L+T_R> M\right)
\]
where $T_L=\inf\{k\ge 0:U_{-n+k}<U_{-n+k-1}\wedge U_{-n+k+1}\}$ and $T_R=\inf\{k\ge0:U_{n-k}<U_{n-k-1}\wedge U_{n-k+1}\}$. Note that the last inequality comes from the dimensionality, because the presence of local minima shields the  boundary effects. Thus
\[
\mathbb P(|N_n-\bar N_n|>M)\le  2\mathbb P\left(T_L> M/2\right)
\]
by symmetry. But $\mathbb P\left(T_L> i\right)=\mathbb P\left(U_n>\ldots>U_{n-i-1}\right)=\frac{1}{(i+2)!}$, thus
\begin{equation}\label{eq:BC}
\mathbb P(|N_n-\bar N_n|>M)\le \frac{2}{\lceil M/2+2\rceil!}.     
\end{equation}
We therefore get
\[\sum_{n}\mathbb P(|N_n-\bar N_n|> \sqrt{|\Lambda_n|})<\infty,\]
which by Borel-Cantelli proves that for only finitely many $n$ we will have $|N_n-\bar N_n|> \sqrt{|\Lambda_n|}$, concluding the proof of the LIL.\\

For the CLT, it is enough to prove, by virtue of Slutsky's Theorem, that 
\[\left|\frac{ N_n-|\Lambda_n|\rho}{\sqrt{|\Lambda_n|\sigma^2}}-\frac{\bar N_n-\mathbb E(\bar N_n)}{\sqrt{|\Lambda_n|\sigma^2}}\right| \to 0, \hspace{0.3cm} \text{in probability.}\]
In fact, let $\epsilon>0$, by Markov's inequality,
\begin{equation}\label{conver_prob}    
\mathbb{P}\left[\left|
\frac{ N_n-|\Lambda_n|\rho}{\sqrt{|\Lambda_n|\sigma^2}}-\frac{\bar N_n-\mathbb E(\bar N_n)}{\sqrt{|\Lambda_n|\sigma^2}}\right|>\epsilon\right]\leq 
\frac{|\mathbb E(\bar N_n) - \rho|\Lambda_n||}{\epsilon\sqrt{|\Lambda_n|\sigma^2}}
+\frac{\mathbb E(|\bar N_n - N_n|)}{\epsilon\sqrt{|\Lambda_n|\sigma^2}}.
\end{equation}
By Proposition \ref{prop:mean-dev}, the first term on the right side of the inequality (\ref{conver_prob}) converges to zero. On the other hand, note that for all $n\geq 1$,
\begin{align*}
\mathbb E(|\bar N_n - N_n|)
&\leq \sum_{i\in \Lambda_n}\mathbb P(X(i)(U)\ne X_{\Lambda_n}(i)(U))\\
&\leq \sum_{i\in \Lambda_n}\mathbb P(\mathcal{A}(\{i\})\not\subset \Lambda_n)\\
&\leq \sum_{i=-n}^n \left[\frac{1}{(n-i+2)!}+\frac{1}{(i+n+2)!}\right]<K<\infty,
\end{align*}
where $K$ is some positive number. Therefore, the second term on the right side of the inequality (\ref{conver_prob}) also converges to zero. In this way, we obtain the desired convergence in probability, concluding the proof of the CLT.\\

We now prove the last part of the proposition. Observe that 
\begin{align*}
\mathbb P\left(\left|{\bar N_n}-\rho|\Lambda_n|\right|>\epsilon\right)&\le\mathbb P\left(\left|{\bar N_n}- N_n\right|+\left|N_n-\rho|\Lambda_n|\right|>\epsilon\right)\\
&\le\mathbb P\left(\left|{\bar N_n}- N_n\right|>\frac{\epsilon}{2}\right)+\mathbb P\left(\left|N_n-\rho|\Lambda_n|\right|>\frac{\epsilon}{2}\right)\\
&\le \frac{2}{\lceil \epsilon/4+2\rceil!}+e^{\frac{1}{e}-\frac{\epsilon^2}{16eB|\Lambda_n|}}.
\end{align*}
where the last line has been obtained by using (\ref{eq:BC}) and (\ref{eq:concentrationmain}).
 \end{proof}

\section*{Acknowledgments} This research was supported by FAPESP (grant number 2022/08948-2) and Universidad de Antioquia.

\bibliographystyle{plain}
\bibliography{Referencias}
\end{document}